\documentclass[12pt,leqno]{amsart}
\usepackage{verbatim,amssymb,amsfonts,latexsym,amsmath,amsthm,tikz}
\usetikzlibrary{arrows}

\newtheorem{theorem}{Theorem}[section]
\newtheorem*{maintheorem1}{Main Theorem (abridged)}
\newtheorem*{maintheorem2}{Main Theorem (unabridged)}
\newtheorem{lemma}[theorem]{Lemma}

\newtheorem{proposition}[theorem]{Proposition}
\newtheorem{question}[theorem]{Question}
\theoremstyle{definition}

\theoremstyle{remark}

\newtheorem*{sub-claim}{sub-claim}


\newcommand{\N}{\mathbb{N}}

\newcommand{\F}{\mathcal{F}}
\newcommand{\G}{\mathcal{G}}
\newcommand{\explicitSet}[1]{\left\lbrace #1 \right\rbrace}
\newcommand{\brackets}[1]{\left\langle #1 \right\rangle}
\newcommand{\set}[2]{\explicitSet{#1 \colon #2}}
\newcommand{\seq}[2]{\brackets{#1 \colon #2}}
\newcommand{\<}{\langle}
\renewcommand{\>}{\rangle}
\newcommand{\0}{\emptyset}
\renewcommand{\a}{\alpha}
\renewcommand{\b}{\beta}

\renewcommand{\k}{\kappa}

\newcommand{\w}{\omega}
\newcommand{\sub}{\subseteq}


\newcommand{\closure}[1]{\overline{#1}}

\newcommand{\card}[1]{\left\lvert #1 \right\rvert}

\renewcommand{\S}{\mathcal S}

\newcommand{\thick}{\Theta}

\newcommand{\subst}{\sub^{U}}
\renewcommand{\boxed}[1]{\text{\fboxsep=.2em\fbox{$#1$}}}
\newcommand{\gen}[1]{\<\!\< #1 \>\!\>}

\begin{document}

\title{Ideals and idempotents in the uniform ultrafilters}
\author{W. R. Brian}
\address {
William R. Brian\\
Department of Mathematics\\
Tulane University\\
6823 St. Charles Ave.\\
New Orleans, LA 70118}
\email{wbrian.math@gmail.com}
\subjclass[2010]{Primary: 54D80, 22A15. Secondary: 03E75, 54H99}
\keywords{Stone-\v{C}ech compactification, uniform ultrafilters, left-maximal idempotents, minimal left ideals, weak $P_\k$-sets}

\maketitle

\begin{abstract}
If $S$ is a discrete semigroup, then $\b S$ has a natural, left-topological semigroup structure extending $S$. Under some very mild conditions, $U(S)$, the set of uniform ultrafilters on $S$, is a two-sided ideal of $\b S$, and therefore contains all of its minimal left ideals and minimal idempotents. We find some very general conditions under which $U(S)$ contains prime minimal left ideals and left-maximal idempotents.

If $S$ is countable, then $U(S) = S^*$, and a special case of our main theorem is that if a countable discrete semigroup $S$ is a weakly cancellative and left-cancellative, then $S^*$ contains prime minimal left ideals and left-maximal idempotents. We will provide examples of weakly cancellative semigroups where these conclusions fail, thus showing that this result is sharp.
\end{abstract}

\section{Introduction}

Semigroups of the form $\b S$ and $S^*$ have played a prominent part in modern combinatorics and algebra. Particularly important have been the minimal left ideals and the minimal idempotents, and these have become the object of a good deal of justifiable curiosity. Our main theorem concerns the existence of minimal left ideals and minimal idempotents with special topological and algebraic properties:

\begin{maintheorem1}
Let $S$ be a countable discrete semigroup. If $S$ is weakly cancellative, then
\begin{enumerate}
\item there is a minimal left ideal $L$ of $S^*$ that is also a weak $P$-set.
\end{enumerate}
If $S$ is also left-cancellative, then
\begin{enumerate}
\setcounter{enumi}{1}
\item $L$ is prime, i.e., $p \cdot q \in L$ if and only if $q \in L$.
\item the idempotents in $L$ are left-maximal (in particular, these idempotents are simultaneously minimal and maximal).
\end{enumerate}
\end{maintheorem1}

In the special case that $S$ is a group, $(2)$ and $(3)$ were proved by Zelenyuk in \cite{YZ2}. Before the appearance of Zelenyuk's paper, the existence of left-maximal idempotents in $S^*$, even for the most natural choices of $S$, was a longstanding open problem (see, e.g., Questions 9.25 and 9.26 in \cite{H&S}). Our proof uses a different technique from Zelenyuk's, and is based instead on a recent set-theoretic result of the author and Jonathan Verner in \cite{B&V}.

This ``abridged'' version of our main theorem solves the problem of left-maximal idempotents in $S^*$ in a very general setting. We will show by example that left cancellativity is necessary to prove $(2)$ and $(3)$, so that the result is fairly sharp.

The unabridged version of our main theorem concerns not $S^*$ but $U(S)$, the set of uniform ultrafilters on $S$. If $S$ is countable, then $U(S) = S^*$, but in the uncountable setting this is not the case. However, assuming $S$ is very weakly cancellative (defined in the next section), $U(S)$ is a two-sided ideal (hence a subsemigroup) of $\b S$. In particular, the minimal left ideals of $U(S)$ are precisely the minimal left ideals of $S^*$ and $\b S$.

\begin{maintheorem2}
Let $S$ be a discrete semigroup with $\card{S} = \k$, where $\k$ is regular. If $S$ is very weakly cancellative, then
\begin{enumerate}
\item there is a minimal left ideal $L$ that is a weak $P_{\k^+}$-set in $U(S)$.
\end{enumerate}
If $S$ is also left-cancellative, then
\begin{enumerate}
\setcounter{enumi}{1}
\item $L$ is prime in $U(S)$.
\item the idempotents in $L$ are left-maximal in $U(S)$ (in particular, these idempotents are simultaneously minimal and maximal).
\end{enumerate}
\end{maintheorem2}

In Section~\ref{sec:prelims}, we provide some background material concerning semifilters, ultrafilters, and the semigroups $\b S$, $S^*$, and $U(S)$. In Section~\ref{sec:countable}, we prove the abridged version of our main theorem and provide examples showing that the result is sharp. We have chosen to prove the abridged version first to highlight the main pattern of the proof, which is the same for the unabridged version. The proof of the abridged version hinges on a result from \cite{B&V} concerning filters and semifilters on countable sets. Before proving the unabridged version, we need to extend this result to uncountable sets of regular cardinality, which is done in Section~\ref{sec:lemma}. In Section~\ref{sec:general}, we prove the unabridged version of our main theorem.

\section{Definitions and Preliminaries}\label{sec:prelims}

\subsection*{Filters, semifilters, and ultrafilters}

A \emph{semifilter} on a set $S$ is a subset $\G$ of $\mathcal{P}(X)$ such that
\begin{itemize}
\item (nontriviality) $\0 \neq \G \neq \mathcal{P}(X)$.
\item (upward heredity) If $A \in \G$ and $A \sub^* B$ then $B \in \G$.
\end{itemize}
As usual, $A \sub^* B$ means that $A \setminus B$ is finite. $\G$ is a \emph{filter} if, additionally, $\G$ satisfies
\begin{itemize}
\item (downward directedness) If $A,B \in \G$ then $A \cap B \in \G$.
\end{itemize}
$\G$ is an \emph{ultrafilter} if $\G$ is a filter and
\begin{itemize}
\item (maximality) there is no filter properly extending $\G$.
\end{itemize}

Fix a semifilter $\S$. $\F$ is a \emph{filter on} $\S$ if $\F$ is a filter and $\F \sub \S$, and $\F$ is an \emph{ultrafilter on $\S$} if $\F$ is a maximal filter on $\S$: i.e., $\F$ is a filter on $\S$, but any filter properly containing $\F$ is not. Equivalently, $\F$ is an (ultra)filter on $\S$ if and only if $\F$ is an (ultra)filter on the partial order $(\S,\sub^*)$ (see \cite{B&V} for more on this).


The set of all ultrafilters on $S$ is denoted $\b S$. As usual, for $s \in S$ we identify the principal ultrafilter $\set{A \sub S}{s \in A}$ with $s$, so that $S \sub \b S$. We view $\b S$ as the \emph{Stone-\v{C}ech compactification} of $S$, where $S$ is assumed to have the discrete topology. The basic open neighborhoods of $\b S$ have the form $\closure{A} = \set{p \in \b S}{A \in p}$.

A \emph{free ultrafilter} on $S$ is an ultrafilter $p$ such that every $A \in p$ is infinite. The set of all free ultrafilters on $S$ is denoted $S^* = \b S \setminus S$. A \emph{uniform ultrafilter} on $S$ is an ultrafilter $p$ such that every $A \in p$ has the same cardinality as $S$. The set of uniform ultrafilters on $S$ is denoted $U(S)$.

Every filter $\F$ on $S$ corresponds to a closed subset of $\b S$, namely $\tilde \F = \bigcap_{A \in \F}\closure{A}$. Conversely, every closed $K \sub \b S$ is equal to $\tilde \F$ for some filter $\F$, namely $\F = \set{A \sub S}{K \sub \closure{A}}$. This correspondence is called \emph{Stone duality}, and $\tilde \F$ is called the \emph{Stone dual} of $\F$.

If $Y$ is a topological space and $X \sub Y$, then $X$ is a \emph{weak $P_\k$-set} (in $Y$) if whenever $D \sub Y \setminus X$ and $\card{D} < \k$, $\closure{D} \cap X = \0$. A weak $P_{\omega_1}$-set is just called a weak $P$-set. Roughly, a weak $P_\k$-set is a set that is far away from small enough subsets of its complement. A \emph{weak $P_\k$-point} is a point $p$ such that $\{p\}$ is a weak $P_\k$-set.

$X$ is a $P$-set if, for every countable collection $\set{U_n}{n \in \w}$ of neighborhoods of $X$, $X$ is in the interior of $\bigcap_{n \in \w}U_n$. In \cite{Brn}, it is proved for the case $S = (\N,+)$ to be independent of ZFC whether $S^*$ contains minimal left ideals that are also $P$-sets. Our abridged main theorem says that if we weaken ``$P$-set'' to ``weak $P$-set'' then we obtain a ZFC theorem rather than a consistency result. We note that this situation is exactly analogous to that for $P$-points and weak $P$-points in $\w^*$: the existence of $P$-points is independent of ZFC (Shelah, \cite{Shl}), but weak $P$-points always exist (Kunen, \cite{Kun}).

A variation of Kunen's proof was used in \cite{B&V} to obtain the following result. Recall that a subset of $\mathcal{P}(\w)$, such as a semifilter, can be identified with a subset of $2^\w$ via characteristic functions, so we can talk about semifilters as being comeager, $G_\delta$, etc.

\begin{proposition}\label{prop:weakPsets}
If $\G$ is a comeager semifilter, then there is an ultrafilter $\F$ on $\G$ such that $\tilde \F$ is a weak $P$-set in $\w^*$.
\end{proposition}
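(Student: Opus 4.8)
The plan is to build $\F$ by transfinite recursion of length $\continuum$, obtaining it as the union of an increasing chain $\seq{\F_\xi}{\xi<\continuum}$ of filters, each contained in $\G$ and generated by fewer than $\continuum$ sets. Fix a $\sub$-decreasing sequence $\seq{W_n}{n\in\w}$ of dense open subsets of $2^\w$ with $\bigcap_n W_n\sub\G$. At successive stages we dovetail two kinds of task. The \emph{maximality tasks} run through every $B\in\G$ and ensure that $\F$ decides $B$ in the partial order $(\G,\sub^*)$, so that the final $\F=\bigcup_{\xi<\continuum}\F_\xi$ is an ultrafilter on $\G$. The \emph{genericity tasks} run through every partition $\mathcal{P}=\seq{P_k}{k\in\w}$ of $\w$ — there are only $\continuum$ of these — and ensure that $\F$ contains a set belonging to
\[
\mathcal{E}_{\mathcal{P}}=\set{A\in\G}{(\forall k)\ \card{A\cap P_k}<\w}\cup\set{A\in\G}{(\exists k)\ A\sub^* P_k}.
\]
The crucial point — and the place where comeagerness is used essentially, drawing on the analysis of comeager semifilters in \cite{B&V} — is that each $\mathcal{E}_{\mathcal{P}}$ is dense in $(\G,\sub^*)$ below any filter generated by fewer than $\continuum$ sets: given such a filter and a condition $C\in\G$, a fusion argument produces $A\sub^* C$ that either diagonalizes against the pieces $P_k$ or sinks into one of them, all the while being steered into $\bigcap_n W_n\sub\G$. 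The same style of fusion keeps $\F_{\xi+1}\sub\G$: any newly adjoined generator is chosen so that its finite intersections with previously chosen generators all lie in $\bigcap_n W_n$. I expect the main obstacle to be exactly this — carrying the condition ``$\F_\xi\sub\G$'' through a recursion that must nonetheless terminate at a filter maximal in $\G$; for a merely meager semifilter the fusions break down and, as the examples later in the paper show, the conclusion can fail.

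Granting that such an $\F$ can be built, here is why $\tilde\F$ is then a weak $P$-set in $\w^*$. Let $D=\set{d_n}{n\in\w}\sub\w^*\setminus\tilde\F$ be countable; we must show $\closure D\cap\tilde\F=\0$. We may assume the $d_n$ are distinct. Carving out $P_n\in d_n$ disjoint from $P_0,\dots,P_{n-1}$ and dumping the leftover into $P_0$ yields a partition $\mathcal{P}=\seq{P_n}{n\in\w}$ of $\w$ with $P_n\in d_n$ for every $n$; by disjointness, $d_m$ contains no $P_n$ with $n\neq m$. Apply the genericity task for $\mathcal{P}$ to get $A\in\F\cap\mathcal{E}_{\mathcal{P}}$. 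If $\card{A\cap P_k}<\w$ for every $k$, then — since each $d_n$ is free and contains $P_n$ — we get $A\notin d_n$ for all $n$; thus $\closure A$ is a clopen neighbourhood of $\tilde\F$ missing $D$, hence missing $\closure D$, and we are done. Otherwise $A\sub^* P_{k_0}$ for a single $k_0$; then $A\notin d_n$ already for every $n\neq k_0$, and since $d_{k_0}\notin\tilde\F$ there is $A'\in\F$ with $A'\notin d_{k_0}$, so that $A\cap A'\in\F$ avoids every $d_n$. Either way $\tilde\F$ has a clopen neighbourhood disjoint from $D$, so $\closure D\cap\tilde\F=\0$.

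Finally, the recursion itself is routine bookkeeping given the density lemma above: a standard pairing function interleaves the $\continuum$ maximality tasks and the $\continuum$ genericity tasks into $\continuum$ stages, and at each stage fewer than $\continuum$ sets have been committed, so there is always room to meet the next task — for a maximality task one adjoins whichever of $B$, $\w\setminus B$ keeps the filter inside $\G$ (again using comeagerness to shrink the relevant intersections back into $\bigcap_n W_n$), and for a genericity task one adjoins a generator from $\mathcal{E}_{\mathcal{P}}$ supplied by density. The content of the proof is therefore concentrated entirely in the behaviour of comeager semifilters — the density of the sets $\mathcal{E}_{\mathcal{P}}$ and the persistence of ``living inside $\G$'' — while the reduction of the weak $P$-set property to these $\continuum$ genericity requirements, via the normalization of $D$ to a partition, is the clean combinatorial core.
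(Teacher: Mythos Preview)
Your reduction at the end is fine: if $\F$ really did meet $\mathcal{E}_{\mathcal{P}}$ for every partition $\mathcal{P}$, then $\tilde\F$ would certainly be a weak $P$-set. The problem is that this genericity requirement is far too strong, and your density lemma is not a theorem of ZFC.

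To see this, take $\G = [\w]^\w$, which is comeager (it is co-countable in $2^\w$). An ultrafilter on $\G$ is then simply a free ultrafilter $p$, and $\tilde\F = \{p\}$. Unwind your criterion: given a partition $\mathcal{P} = \seq{P_k}{k \in \w}$, either $p$ contains some $A \sub^* P_{k_0}$ --- equivalently $P_{k_0} \in p$ --- or $p$ contains a set meeting every $P_k$ in a finite set. Demanding this for every partition is \emph{exactly} the statement that $p$ is a $P$-point. Since there are models of ZFC with no $P$-points (Shelah), your recursion cannot succeed in general; the ``fusion argument'' you invoke for the density of $\mathcal{E}_{\mathcal{P}}$ must fail at some stage. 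In other words, your scheme would prove the existence of $P$-points, not merely weak $P$-points, and that is why it cannot work.

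The paper's proof (via Theorem~\ref{thm:lemma}, following Baker--Kunen and ultimately Kunen's original weak $P$-point construction) takes a genuinely different route. Instead of trying to diagonalize against one countable set $D$ at a time via a single partition, it builds $\tilde\F$ as a $\widehat{\ \ }$\emph{set}: for each monotone family $\set{U_r}{r \in [\w]^{<\w}}$ of neighbourhoods, one produces $\aleph_1$ many neighbourhoods $V_\a$ whose finite intersections refine the $U_{\widehat r}$'s. This ``$\aleph_1$-OK'' structure is precisely what allows one to separate $\tilde\F$ from a countable $D$ without ever producing a single set that diagonalizes the whole partition --- one uses \emph{different} $V_\a$'s for different $d_n$'s and combines them only finitely at a time. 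The construction is driven not by density of a simple family like $\mathcal{E}_{\mathcal{P}}$ but by an independent step-family matrix (Lemma~\ref{lem:matrix}), and this extra combinatorial input is exactly what your argument is missing.
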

\begin{proof}
See Section 5 of \cite{B&V}. Alternatively, see our generalization of this result in Theorem~\ref{thm:lemma} below.
\end{proof}

This proposition will be crucial to the proof of the abridged version of our main theorem. For the unabridged version, we will use a generalization of this result, given as Theorem~\ref{thm:lemma} below, that applies to semifilters on uncountable sets of regular cardinality.

\subsection*{$\b\w$ as a semigroup}

Henceforth, $S$ denotes a semigroup with the discrete topology and with $\cdot$ as its binary operation.

There is a unique extension of the operator $\cdot$ to all of $\b S$ such that
\begin{itemize}
\item for each $p \in \b S$, the function $x \mapsto x \cdot p$ is continuous on $\b S$.
\item for each $s \in S$, the function $x \mapsto s \cdot x$ is continuous on $\b S$.
\end{itemize}
This extension is accomplished in the natural way by applying the Stone extension property of $\b S$ twice in a row (see Theorem 4.1 of \cite{H&S} for details). We can also define this operation explicitly:
$$p \cdot q = \set{A \sub S}{\set{s}{s^{-1} A \in q} \in p}.$$
Here, as below, $s^{-1} A = \set{t \in S}{s \cdot t \in A}$. Notice that this definition makes sense even when $s$ does not have a left inverse in $S$.

The semigroup $(\b S,\cdot)$ is usually abbreviated $\b S$. If $S$ is sufficiently well-behaved (see Proposition~\ref{prop:fact3} below), then $S^*$ and $U(S)$ are subsemigroups of $\b S$.

A \emph{left ideal} of $\b S$ is a nonempty $L \sub \b S$ such that $\b S \cdot L \sub L$. A \emph{minimal left ideal} is a left ideal that does not properly contain any other left ideal. $L$ is a \emph{prime} left ideal if $p \cdot q \notin L$ whenever $q \notin L$.

\begin{proposition}\label{prop:fact2}
If $L$ is closed in $\b S$, then $L$ is a left ideal of $\b S$ if and only if, for every $s \in S$, $s \cdot L \sub L$.
\end{proposition}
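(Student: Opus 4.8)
The plan is to reduce everything to the density of $S$ in $\b S$ together with the continuity of right translation; no new machinery is needed. The forward implication is immediate: if $L$ is a left ideal then $\b S \cdot L \sub L$, and since $S \sub \b S$ this already gives $s \cdot L \sub L$ for every $s \in S$. (Closedness is not used in this direction.)

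For the converse, assume $L$ is closed and $s \cdot L \sub L$ for every $s \in S$ (and, as is built into the definition of a left ideal, that $L \neq \0$). Fix an arbitrary $q \in L$ and consider the right-translation map $\rho_q \colon \b S \to \b S$ given by $\rho_q(x) = x \cdot q$, which is continuous by the defining property of the extended operation. The hypothesis says precisely that $\rho_q(s) = s \cdot q \in L$ for every $s \in S$, i.e. $\rho_q[S] \sub L$. Now invoke density: since $\closure{S} = \b S$ and $\rho_q$ is continuous with $L$ closed,
$$\rho_q[\b S] = \rho_q\!\left[\closure{S}\right] \sub \closure{\rho_q[S]} \sub \closure{L} = L,$$
so $p \cdot q \in L$ for every $p \in \b S$. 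As $q \in L$ was arbitrary, $\b S \cdot L \sub L$, and $L$ is a left ideal.

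There is no genuine obstacle here; the one point deserving care is that the hypothesis "$s \cdot L \sub L$" quantifies only over the principal ultrafilters $s \in S$, so one cannot pass to "$\b S \cdot L \sub L$" in a single step. The device is to fix the second coordinate $q$ first, reducing to the one-variable continuous map $\rho_q$, and then use that the continuous image of a dense set is contained in the closure of its image — which is where closedness of $L$ is essential.
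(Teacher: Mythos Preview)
Your argument is correct and is precisely the standard proof: the forward direction is trivial, and for the converse you fix $q \in L$, use continuity of $\rho_q$, density of $S$ in $\b S$, and closedness of $L$ to pass from $S \cdot q \sub L$ to $\b S \cdot q \sub L$. There is nothing to compare, since the paper does not supply its own proof but simply refers the reader to Lemma 19.4 of \cite{H&S}; your write-up is essentially what one finds there.
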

\begin{proof}
See, e.g., Lemma 19.4 of \cite{H&S}.
\end{proof}

An \emph{idempotent ultrafilter} is any $p \in \b S$ such that $p \cdot p = p$. The idempotents of $\b S$ admit three natural partial orders:
$$p \leq_L q \qquad \Leftrightarrow \qquad p \cdot q = p,$$
$$p \leq_R q \qquad \Leftrightarrow \qquad q \cdot p = p,$$
$$p \leq q \qquad \Leftrightarrow \qquad p \leq_L q \ \text{ and } \ p \leq_R q.$$

An idempotent $p$ is minimal with respect to one of these orders if and only if it is minimal with respect to all three, and this is true if and only if $p$ belongs to a minimal left ideal (see, e.g., Theorems 1.36 and 1.38 in \cite{H&S}). Idempotents of this kind are simply called \emph{minimal idempotents}. An idempotent is called \emph{left-maximal}, \emph{right-maximal}, or \emph{maximal} if it is maximal with respec to $\leq_L$, $\leq_R$, or $\leq$, respectively.

Using compactness, it is fairly easy to show that $\b S$, $S^*$, and $U(S)$ contain right-maximal idempotents (see, e.g., Theorem 2.12 in \cite{H&S}). However, the existence of left-maximal idempotents has been a difficult problem. Even for $S = (\mathbb Z,+)$, the existence of left-maximal idempotents in $\b S$ was open for years, despite the fact that much work was done on this problem (see Questions 9.25 and 9.26 in \cite{H&S}; see also Questions 5.5(2),(3) in \cite{HSZ}, Problems 4.6 and 4.7 in \cite{Brn}, and \cite{Zln}). The following Proposition (a known folklore result) tells us one way to find left-maximal idempotents in $\b S$.

\begin{proposition}\label{prop:2implies3}
If $\b S$ (or $S^*$ or $U(S)$) contains a minimal left ideal that is also prime, then it contains an idempotent that is both minimal and left-maximal.
\end{proposition}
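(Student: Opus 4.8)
The plan is to work inside a minimal left ideal $L$ of $\b S$ that is prime, fix a minimal idempotent $p \in L$ (one exists, since every minimal left ideal contains an idempotent), and then enlarge $p$ along the order $\leq_L$ to a $\leq_L$-maximal idempotent while staying inside $L$. The key point is that primeness of $L$ lets us control where idempotents above $p$ can live.

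First I would recall the standard fact that any idempotent $q$ with $p \leq_L q$ (i.e.\ $p \cdot q = p$) lies in the same minimal left ideal as $p$, provided $q$ itself is minimal; but $q$ need not be minimal a priori, so this needs an argument using primeness. Here is the idea: suppose $p \leq_L q$, so $p \cdot q = p \in L$. Since $L$ is prime and $p \cdot q \in L$, we get $q \in L$. Thus \emph{every} idempotent $\geq_L p$ already belongs to $L$. But $L$ is a minimal left ideal, and it is a general fact (Theorems 1.36 and 1.38 of \cite{H&S}) that every idempotent lying in a minimal left ideal is itself minimal. Hence every idempotent $\geq_L p$ is minimal, and for minimal idempotents the three orders $\leq_L$, $\leq_R$, $\leq$ coincide in the sense of minimality — more precisely, if $p$ and $q$ are both minimal idempotents in the same minimal left ideal $L$ and $p \leq_L q$, then $p \cdot q = p$ and $q \cdot p = q$, from which $p = p \cdot q = (q \cdot p)\cdot q$; chasing these identities (again standard, e.g.\ via Theorem 1.38 of \cite{H&S}) forces $p = q$. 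Therefore $p$ is already $\leq_L$-maximal.

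So the structure of the argument is: (1) pick any idempotent $p$ in the given prime minimal left ideal $L$ — it is automatically minimal; (2) show using primeness that any idempotent $q$ with $p \cdot q = p$ must lie in $L$, hence is also minimal; (3) invoke the standard structure theory of minimal idempotents in a single minimal left ideal to conclude $q = p$; (4) conclude $p$ is both minimal and $\leq_L$-maximal, i.e.\ left-maximal. The identical argument applies verbatim to $S^*$ or $U(S)$ in place of $\b S$, since the hypothesis and all the cited structural facts are internal to whichever subsemigroup we are working in.

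The only real obstacle is step (3): making sure that two minimal idempotents in the same minimal left ideal that are $\leq_L$-comparable must be equal. This is where one has to be slightly careful, because in general $L$ is a minimal left ideal but need not be a group — however, a minimal left ideal in a compact right-topological semigroup decomposes as a union of groups indexed by its idempotents, and $p \leq_L q$ with both minimal places $p$ and $q$ in the relationship $p \cdot q = p$, $q \cdot p = q$; applying these and idempotence yields $q = q \cdot q = q \cdot (p \cdot q)$... and after a short computation $p = q$. I expect this to be a two-line consequence of Theorem 1.38 (or the surrounding lemmas) in \cite{H&S}, so the whole proposition should have a very short proof once the reduction via primeness is in place.
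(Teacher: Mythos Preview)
Your overall strategy matches the paper's exactly: take a minimal idempotent $p \in L$, and for any idempotent $q$ with $p \leq_L q$ use primeness of $L$ (from $p \cdot q = p \in L$) to force $q \in L$, whence $q$ is itself minimal.

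There is, however, a slip at your step~(3). You claim that two minimal idempotents $p,q$ in the same minimal left ideal $L$ with $p \leq_L q$ must satisfy $p = q$, and you flag this as ``the only real obstacle''. In fact the identity $p=q$ is false in general: for \emph{any} two idempotents $p,q \in L$ one has $L = \b S \cdot q$, so $p = r \cdot q$ for some $r \in \b S$, giving $p \cdot q = r \cdot q \cdot q = r \cdot q = p$; symmetrically $q \cdot p = q$. Thus $p \leq_L q$ and $q \leq_L p$ always hold within $L$, while $L$ typically contains many distinct idempotents. The relation $\leq_L$ is only a preorder, not a partial order, so ``$\leq_L$-maximal'' must be read as ``$p \leq_L q \Rightarrow q \leq_L p$'' rather than ``$p \leq_L q \Rightarrow p = q$''. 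With that correction your argument is immediately complete: once $q \in L$, the computation above gives $q \leq_L p$, so no such $q$ witnesses a failure of maximality. (The paper's own proof is loose in the same spot --- it asserts $p \not\leq_L q$ when $q \in L$ and $q \neq p$ --- but the conclusion that $p$ is left-maximal survives for exactly this reason.)
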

\begin{proof}
If $L$ is a minimal left ideal, then $L$ contains a minimal idempotent $p$ (this is sometimes called ``Ellis's Theorem'' or the ``Ellis-Numakura Lemma''; see Corollary 2.6 in \cite{H&S}). If $L$ is also a prime ideal, we claim that $p$ is also $\leq_L$-maximal. If $q$ is an idempotent and $q \neq p$, then either $q \in L$, in which case $p \not\leq_L q$ because $q$ is also minimal, or $q \notin L$, in which case $q+p \notin L$ (because $L$ is prime), so in particular $p+q \neq p$ and $p \not\leq q$. In either case, $p \not\leq q$ and, as $q$ was arbitrary, $p$ is $\leq_L$-maximal.
\end{proof}

For a fixed $s \in S$, let $\lambda_s(x) = s \cdot x$ and $\rho_s(x) = x \cdot s$. We can classify semigroups according to how near to being injective these functions are:
\begin{itemize}
\item $S$ is \emph{left cancellative} if for every $s,t \in S$, $\card{\lambda^{-1}_s(t)} = 1$ (i.e., $\lambda_s$ is one-to-one).
\item $S$ is \emph{$\k$-weakly cancellative} if for every $s,t \in S$, $\card{\lambda^{-1}_s(t)} < \k$ and $\card{\rho^{-1}_s(t)} < \k$ (i.e., $\lambda_s$ and $\rho_s$ are ($<\!\k$)-to-one).
\item $S$ is \emph{weakly cancellative} if it is $\aleph_0$-weakly cancellative.
\item $S$ is \emph{very weakly cancellative} if it is $\card{S}$-weakly cancellative.
\end{itemize}

One could similarly define right cancellative, right and left $\k$-weakly cancellative, etc., but the definitions we have stated are the only ones we will need. Notice that, as with $S^*$ and $U(S)$, the distinction between weak and very weak cancellativity collapses if $S$ is countable.

$R$ is a \emph{right ideal} of $\b S$ if $R \cdot \b S \sub R$, and $I$ is a \emph{two-sided ideal} if it is both a left and right ideal. Recall that the union of all the minimal left ideals of $\b S$ is a two-sided ideal, and is contained in every other two-sided ideal of $\b S$.

\begin{proposition}\label{prop:fact3}
Let $S$ be a semigroup of size $\k$.
\begin{enumerate}
\item If $S$ is weakly cancellative, then $S^*$ is a subsemigroup of $\b S$. Moreover, $S^*$ is a two-sided ideal, so every minimal left ideal of $\b S$ is contained in $S^*$.
\item If $\k$ is regular and $S$ is very weakly cancellative, then $U(S)$ is a subsemigroup of $\b S$. Moreover, $U(S)$ is a two-sided ideal, so every minimal left ideal of $\b S$ is contained in $U(S)$.
\item If $\k$ is singular and $S$ is $\mu$-weakly cancellative for some $\mu < \k$, then $U(S)$ is a subsemigroup of $\b S$. Moreover, $U(S)$ is a two-sided ideal, so every minimal left ideal of $\b S$ is contained in $U(S)$.
\end{enumerate}
\end{proposition}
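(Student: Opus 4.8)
The plan is to reduce all three statements to a single claim: in each case, the set of ultrafilters named is a nonempty two-sided ideal of $\b S$. Granting this, the rest follows from facts recorded above — a two-sided ideal contains the union of all minimal left ideals of $\b S$, hence every minimal left ideal; and a nonempty two-sided ideal $I$ satisfies $I \cdot I \sub I \cdot \b S \sub I$, so it is a subsemigroup. So I will argue only the two-sided ideal claim, uniformly across the three cases.

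To unify them, fix a cardinal $\lambda$: take $\lambda = \aleph_0$ in $(1)$ and $\lambda = \k = \card{S}$ in $(2)$ and $(3)$, and let $\mathcal I = \set{X \sub S}{\card{X} < \lambda}$, the ideal of \emph{small} subsets of $S$. This is a proper ideal on $S$ (assuming $S$ is infinite in $(1)$; and $S \notin \mathcal I$ in $(2)$ and $(3)$ since $\card{S} = \k$ is not $< \k$), so its dual filter extends to an ultrafilter disjoint from $\mathcal I$, and unwinding the definitions gives $S^* = \set{p}{p \cap \mathcal I = \0}$ in case $(1)$, while $U(S) = \set{p}{p \cap \mathcal I = \0}$ in cases $(2)$ and $(3)$ — for the latter, being uniform is the same as containing no set of size $< \k$, since every subset of $S$ has size $\le \k$. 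In particular all of these sets are nonempty. Let $\mu$ be the \emph{to-one} bound supplied by the hypothesis: $\mu = \aleph_0$ in $(1)$, $\mu = \k$ in $(2)$, and $\mu$ the given cardinal $< \k$ in $(3)$, so that every $\lambda_s$ and every $\rho_s$ is $(<\mu)$-to-one. The one combinatorial input I need is: $(\star)$ a union of fewer than $\lambda$ sets, each of size less than $\mu$, has size less than $\lambda$. This should hold because a finite union of finite sets is finite, in $(1)$; because $\k = \mu = \lambda$ is regular, in $(2)$; and because the number of sets and the size bound $\mu$ are both $< \k$, in $(3)$ — so the union is small even though $\k$ is singular. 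From $(\star)$ and $(<\mu)$-to-oneness, writing $s^{-1}X = \lambda_s^{-1}(X)$: if $A \in \mathcal I$ then $s^{-1}A = \bigcup_{a \in A}\lambda_s^{-1}(a) \in \mathcal I$ and $\rho_t^{-1}(A) \in \mathcal I$ likewise; and if $C \in \mathcal I$ too, then $\bigcup_{t \in C}\rho_t^{-1}(A) = \bigcup_{(t,a) \in C \times A}\rho_t^{-1}(a) \in \mathcal I$, using $\card{C \times A} < \lambda$.

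With these closure properties, the two-sided ideal claim comes down to showing, for each $A \in \mathcal I$, that $A \notin p\cdot q$ whenever $p \cap \mathcal I = \0$ (giving the right-ideal property) and whenever $q \cap \mathcal I = \0$ (the left-ideal property); here $A \in p\cdot q$ iff $B_A := \set{s}{s^{-1}A \in q} \in p$. The left-ideal half is immediate: if $q \cap \mathcal I = \0$ and $A \in p\cdot q$, then $B_A \neq \0$, so $s^{-1}A = \lambda_s^{-1}(A) \in q$ for some $s$, contradicting $\lambda_s^{-1}(A) \in \mathcal I$. For the right-ideal half it is enough (since $p \cap \mathcal I = \0$) to show $B_A \in \mathcal I$, and here I would split on $q$: if $q \cap \mathcal I = \0$, then $s^{-1}A = \lambda_s^{-1}(A) \in \mathcal I$ for every $s$, so no $s^{-1}A$ belongs to $q$ and $B_A = \0$; if instead some $C \in q$ lies in $\mathcal I$, then every $s \in B_A$ has $s^{-1}A \cap C \neq \0$, i.e. $s\cdot t \in A$ for some $t \in C$, i.e. $s \in \bigcup_{t \in C}\rho_t^{-1}(A)$, whence $B_A \sub \bigcup_{t \in C}\rho_t^{-1}(A) \in \mathcal I$.

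The step I expect to require the most care is this case split on $q$ in the right-ideal argument: the case where $q$ avoids $\mathcal I$ is exactly where $\lambda_s$ being $(<\mu)$-to-one is used, and the other case is where $\rho_t$ being $(<\mu)$-to-one is used, so both halves of the weak-cancellativity hypothesis are genuinely needed. The only real difference between $(2)$ and $(3)$ is that in $(2)$ the regularity of $\k$ is precisely what makes $(\star)$ true, while in $(3)$ the strict inequality $\mu < \k$ does the same job without regularity; everything else is bookkeeping with the explicit formula for $p\cdot q$ and elementary cardinal arithmetic.
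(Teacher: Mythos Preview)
Your proof is correct and follows essentially the same approach as the paper: both show $U(S)$ is a left ideal by noting $s^{-1}A$ is small whenever $A$ is, and both handle the right-ideal direction by a case split on whether $q$ contains a small set, using that small set to bound $\set{s}{s^{-1}A \in q}$. Your packaging is slightly cleaner---you isolate the single combinatorial fact $(\star)$ and use a direct union bound $B_A \sub \bigcup_{t \in C}\rho_t^{-1}(A)$, whereas the paper merges the two small sets into one $C$, defines $X_C = \set{s}{(s \cdot C) \cap C \neq \0}$, and runs a pigeonhole/contradiction argument that amounts to the same estimate.
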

\begin{proof}
$(1)$ is a special case of $(2)$. This case is already known: see Theorems 4.36 and 4.37 in \cite{H&S}.

The arguments for $(2)$ and $(3)$ are the same except at one or two points. We will prove them simultaneously, and consider the two different cases only at the points where we need to.

First we show that $U(S)$ is a left ideal. Let $B \sub S$ with $\card{B} < \k$, and fix $s \in S$. If $\k$ is regular (respectively, singular), then very weak cancellativity (respectively, $\mu$-weak cancellativity) implies
$$\card{s^{-1}B} = \card{\bigcup_{t \in B}\lambda_s^{-1}(t)} < \k.$$
If $q \in U(S)$, it follows that $\set{s}{s^{-1}B \in q} = \0$. Therefore
$$B \notin \set{A}{\set{s}{s^{-1} A \in q} \in p} = p \cdot q$$
for any $p \in \b S$. Since $B$ was arbitrary with $\card{B} < \k$, this shows $p \cdot q \in U(S)$ for every $p \in \b S$. Thus $U(S)$ is a left ideal.

Next we show that $U(S)$ is a right ideal. Fix $p \in U(S)$ and $q \in \b S$; we must show $p \cdot q \in U(S)$. If $q \in U(S)$ then this follows from the previous paragraph. So assume $q \notin U(S)$, and fix $A \sub S$ with $\card{A} < \k$ and $A \in q$.

If $C \sub S$ and $\card{C} < \k$, consider
$$X_C = \set{s \in S}{(s \cdot C) \cap C \neq \0}.$$
Suppose $\card{X_C} = \k$. To every $s \in X_C$ we can associate a pair $p(s) = (c_1,c_2) \in C \times C$ such that $s \cdot c_1 = c_2$. If $\k$ is regular, by the pigeonhole principle there must be some $c_1,c_2$ such that $p(s) = (c_1,c_2)$ for $\k$ distinct values of $s$, contradicting very weak cancellativity. Similarly, if $\k$ is singular then there must be some $c_1,c_2$ such that $p(s) = (c_1,c_2)$ for at least $\mu$ distinct values of $s$, contradicting $\mu$-weak cancellativity. In either case we have a contradiction, so $\card{X_C} < \k$.

Now suppose $p \cdot q \notin U(S)$, and fix $B \in p \cdot q$ with $\card{B} < \k$. Letting $C = A \cup B$, we have $C \in q$, $C \in p \cdot q$, and $\card{C} < \k$.
\begin{align*}
& \set{s \in S}{s^{-1}C \in q} = \set{s \in S}{s^{-1}C \cap C \in q} \\
& \qquad \sub \set{s \in S}{s^{-1}C \cap C \neq \0} = \set{s \in S}{(s \cdot C) \cap C \neq \0} = X_C,
\end{align*}
so $\card{\set{s \in S}{s^{-1}C \in q}} < \k$ by the previous paragraph. Since $p \in U(S)$ and $p \cdot q = \set{C \sub S}{\set{s}{s^{-1} C \in q} \in p}$, this is a contradiction. Thus $p \cdot q \in U(S)$, which completes the proof that $U(S)$ is a right ideal.

This shows $U(S)$ is a two-sided ideal under the assumptions in $(2)$ or $(3)$. Every two-sided ideal in $\b S$ is a subsemigroup of $\b S$ and contains all the minimal left ideals, so we are done.
\end{proof}

In the remainder of the paper, we will work mostly with semigroups satisfying the assumptions of $(2)$ or $(3)$. We will use Proposition~\ref{prop:fact3} implicitly in several places.

\section{The main theorem (abridged)}\label{sec:countable}

In this section we prove the abridged version of our main theorem through a sequence of smaller propositions and lemmas. Some of these results are valid for uncountable semigroups and will be used again in Section~\ref{sec:general} to prove the unabridged version. We will explicitly mark every result that depends on the countability of $S$.

Say that $A \sub S$ is $S$\emph{-thick} whenever, for every finite $F \sub S$, there is some $s \in S$ such that $F \cdot s \sub A$ (this definition is taken from \cite{BHM}). We denote the collection of $S$-thick sets by $\thick_S$.

\begin{proposition}\label{prop:fact1}
Suppose $S$ is any semigroup and $A \sub S$. $A \in \thick_S$ if and only if there is some (minimal) left ideal $L$ of $\b S$ such that $L \sub \closure{A}$.
\end{proposition}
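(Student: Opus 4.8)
The plan is to prove the two directions separately, using Stone duality to pass between thickness of $A$ and the existence of a left ideal inside $\closure{A}$; the parenthetical ``minimal'' will come essentially for free once we have any left ideal inside $\closure{A}$, since every left ideal contains a minimal one.

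For the forward direction, suppose $A \in \thick_S$. I would build a filter whose Stone dual sits inside $\closure{A}$ and is a left ideal. The key observation is Proposition~\ref{prop:fact2}: a closed set $L$ is a left ideal iff $s \cdot L \sub L$ for every $s \in S$. So I want a closed $L \sub \closure{A}$ that is $S$-invariant under left translation. The natural candidate is to take $\F$ to be the filter generated by the sets $\set{s \in S}{F \cdot s \sub A}$ as $F$ ranges over finite subsets of $S$ — these are nonempty by $S$-thickness, and they form a filter base because $\set{s}{F_1 \cdot s \sub A} \cap \set{s}{F_2 \cdot s \sub A} = \set{s}{(F_1 \cup F_2) \cdot s \sub A}$, again nonempty by thickness. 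Let $L = \tilde\F$. One checks $L \sub \closure{A}$ because $\{e\}$ (or any singleton $\{t\}$, giving $\set{s}{t \cdot s \in A}$; actually more directly, taking $F = \0$ is degenerate, so instead note $A$ itself contains each $\set{s}{\{t\} \cdot s \sub A}$ translated appropriately) — more carefully, I would verify directly that $\closure{A} \supseteq L$ by showing $A \in \F$, which holds since for a suitable choice we get a generator contained in $A$. Then the crucial step: for $s \in S$ and $p \in L$, show $s \cdot p \in L$, i.e., every generator of $\F$ belongs to $s \cdot p$. Fix a finite $F \sub S$; I need $\set{t}{t^{-1}\set{u}{F \cdot u \sub A} \in p} \ni s$, i.e., $s^{-1}\set{u}{F\cdot u \sub A} \in p$. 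But $s^{-1}\set{u}{F \cdot u \sub A} = \set{v}{F \cdot (s \cdot v) \sub A} = \set{v}{(F \cdot s) \cdot v \sub A} \supseteq \set{v}{(F \cup Fs) \cdot v \sub A}$, which is a generator of $\F$, hence in $p$. So $s \cdot p \in L$, and by Proposition~\ref{prop:fact2}, $L$ is a left ideal contained in $\closure{A}$. Passing to a minimal left ideal inside $L$ handles the parenthetical.

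For the converse, suppose $L$ is a left ideal with $L \sub \closure{A}$, and suppose toward a contradiction that $A \notin \thick_S$: there is a finite $F \sub S$ such that for no $s \in S$ do we have $F \cdot s \sub A$, i.e., for every $s$ there is $f \in F$ with $f \cdot s \notin A$. Fix any $p \in L$. For each $f \in F$, consider $\set{s}{f \cdot s \notin A} = \set{s}{s \notin f^{-1}A} = S \setminus f^{-1}A$; the hypothesis says these sets cover $S$, so $\bigcup_{f \in F}(S \setminus f^{-1}A) = S \in p$, hence by finiteness some $S \setminus f_0^{-1}A \in p$, i.e., $f_0^{-1}A \notin p$, i.e., $A \notin f_0 \cdot p$. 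But $f_0 \cdot p \in L \sub \closure{A}$ since $L$ is a left ideal, so $A \in f_0 \cdot p$ — contradiction. Hence $A \in \thick_S$.

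I expect the main obstacle to be bookkeeping in the forward direction: making sure the filter base is legitimately nonempty and that the translation computation $s^{-1}\set{u}{F \cdot u \sub A} = \set{v}{(F \cdot s) \cdot v \sub A}$ is carried out correctly (recalling that $s^{-1}X = \set{v}{s \cdot v \in X}$ and that $\cdot$ is associative but not commutative, so the order of $F$ and $s$ matters). The converse is a short pigeonhole argument once one unwinds the definition of $f_0 \cdot p$ and uses that $L$ is closed under left multiplication by elements of $S$.
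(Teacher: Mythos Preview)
Your converse is clean and correct. In the forward direction, however, there is a genuine gap at the point you yourself flag as ``bookkeeping'': you claim $A \in \F$ because ``for a suitable choice we get a generator contained in $A$.'' That is not true in a general semigroup. Your generators are the sets $B_F = \set{s}{F \cdot s \sub A}$ for finite $F \sub S$; to have $B_F \sub A$ you would need $s \in A$ whenever $F \cdot s \sub A$, which essentially asks for a left identity in $F$. Without an identity element there is no reason for any $B_F$ to lie inside $A$, so $\tilde\F \sub \closure{A}$ is not established.

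The fix is cheap and already implicit in your own translation computation. You correctly show that for $p \in \tilde\F$ and $s \in S$ one has $s^{-1}B_F = B_{F \cdot s} \in p$, hence $s \cdot p \in \tilde\F$. Apply this with $F = \{s\}$: then $s^{-1}A = B_{\{s\}} \in p$, which says exactly that $A \in s \cdot p$, i.e., $s \cdot p \in \closure{A}$ for every $s \in S$. So although $\tilde\F$ itself need not sit inside $\closure{A}$, you do get $S \cdot p \sub \closure{A}$, and by continuity of $x \mapsto x \cdot p$ the closed left ideal $\b S \cdot p = \closure{S \cdot p}$ lies in $\closure{A}$. Then pass to a minimal left ideal inside $\b S \cdot p$ as you intended. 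With this correction your argument goes through.

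As for comparison with the paper: the paper does not prove this proposition at all but simply cites Theorem~2.9(c) of \cite{BHM}. Your proof is therefore not competing with an alternative argument; it is supplying one where the paper defers to the literature.
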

\begin{proof}
See Theorem 2.9(c) in \cite{BHM}.
\end{proof}

Proposition~\ref{prop:fact1} connects the $S$-thick sets to the algebra of $\b S$, and the next proposition does so in a much stronger way. The special case $S = (\N,+)$ is proved in \cite{Brn} (see Lemma 3.2).

\begin{proposition}\label{prop:thicksets}
Suppose $S$ is any semigroup and $\F$ is a filter on $S$. Then $\tilde \F$ is a minimal left ideal in $\b S$ if and only if $\F$ is an ultrafilter on $\thick_S$.
\end{proposition}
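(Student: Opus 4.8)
The plan is to prove the biconditional by characterizing $\tilde\F$ as a minimal left ideal through the lens of Propositions~\ref{prop:fact1} and~\ref{prop:fact2}. Throughout, recall that $\tilde\F = \bigcap_{A \in \F}\closure A$ is closed, and by Stone duality $\F$ is exactly the set of $A \sub S$ with $\tilde\F \sub \closure A$.

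\medskip
\textbf{($\Leftarrow$).} Suppose $\F$ is an ultrafilter on $\thick_S$. First I would check that $\tilde\F$ is a left ideal. By Proposition~\ref{prop:fact2}, since $\tilde\F$ is closed, it suffices to show $s \cdot \tilde\F \sub \tilde\F$ for each $s \in S$; equivalently, for every $A \in \F$ we need $s \cdot \tilde\F \sub \closure A$, i.e. $s^{-1}A \in \F$. Here the key point is that $\F$, being a filter on the semifilter $\thick_S$, has the property that $A \in \F$ forces $A \in \thick_S$, and one checks that $s^{-1}A$ is again $S$-thick whenever $A$ is (given finite $F \sub S$, apply $S$-thickness of $A$ to the finite set $sF = \{s\} \cdot F$ to get $t$ with $sFt \sub A$, hence $Ft \sub s^{-1}A$). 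So $s^{-1}A \in \thick_S$. To conclude $s^{-1}A \in \F$ I would use maximality of $\F$ on $\thick_S$: since $A \in \F$ and $A \sub^* s^{-1}A$ is not automatic, instead I argue that $\F \cup \{s^{-1}A\}$ generates a filter inside $\thick_S$ — any finite intersection of elements of $\F$ with $s^{-1}A$ contains an element of $\F$ intersected with $s^{-1}A$, which is $S$-thick because... this is the delicate spot (see below). Granting it, maximality gives $s^{-1}A \in \F$, so $\tilde\F$ is a left ideal. Then by Proposition~\ref{prop:fact1}, every $A \in \F$ has $\tilde\F \sub \closure A$ with $\tilde\F$ a left ideal, so $A \in \thick_S$; conversely if $A \in \thick_S$ then since $\F$ is an ultrafilter on $\thick_S$ and $\tilde\F$ is a left ideal, $\tilde\F \sub \closure A$ would follow once we know $\tilde\F$ is contained in some minimal left ideal sitting inside $\closure A$ — so finally I would show $\tilde\F$ is \emph{minimal}: if $L \subsetneq \tilde\F$ is a left ideal, take it closed (the closure of a left ideal is a left ideal), so $L = \tilde\G$ for a filter $\G \supsetneq \F$; then by Proposition~\ref{prop:fact1} every $A \in \G$ is $S$-thick, so $\G$ is a filter on $\thick_S$ properly extending $\F$, contradicting maximality.

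\medskip
\textbf{($\Rightarrow$).} Suppose $\tilde\F$ is a minimal left ideal. By Proposition~\ref{prop:fact1}, $\tilde\F \sub \closure A$ implies $A \in \thick_S$; but $\tilde\F \sub \closure A$ is precisely $A \in \F$ by Stone duality, so $\F \sub \thick_S$, i.e. $\F$ is a filter on $\thick_S$. For maximality: if $\G$ is a filter on $\thick_S$ with $\F \subsetneq \G$, then $\tilde\G \subsetneq \tilde\F$ (proper because $\G$ has a member not in $\F$, which by duality is a set $B$ with $\tilde\F \not\sub \closure B$ but $\tilde\G \sub \closure B$), and since $\G \sub \thick_S$, Proposition~\ref{prop:fact1} gives a minimal left ideal $L \sub \closure B$ for each $B \in \G$. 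I would then argue $\tilde\G$ contains a left ideal: actually the cleanest route is that $\tilde\G$, being the Stone dual of a filter all of whose members are $S$-thick, contains a left ideal by Proposition~\ref{prop:fact1} applied appropriately, or directly — $\bigcap_{B\in\G}\closure B$ being an intersection of sets each containing a minimal left ideal, plus a compactness/finite-intersection argument — contains a left ideal $L' \sub \tilde\G \subsetneq \tilde\F$, contradicting minimality of $\tilde\F$.

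\medskip
\textbf{Main obstacle.} The crux — in both directions — is the interaction between finite intersections and $S$-thickness: $\thick_S$ is a semifilter but \emph{not} a filter (the intersection of two thick sets need not be thick), so ``filter on $\thick_S$'' is genuinely about filters living inside a non-downward-directed family, and I must use the equivalence (noted in the preliminaries, following \cite{B&V}) that a filter on a semifilter $\S$ is the same as a filter on the poset $(\S, \sub^*)$. The hard part will be showing that the relevant sets stay $S$-thick when I need them to: for $s^{-1}A$ with $A$ thick this is the direct computation above, but for finite intersections $A \cap s^{-1}A$ (or $A_1 \cap \dots \cap A_n$ with all $A_i \in \F$) I cannot expect thickness in general — instead I should route everything through Propositions~\ref{prop:fact1} and~\ref{prop:fact2} at the level of \emph{closed sets and left ideals in $\b S$}, where finite intersections are unproblematic, rather than trying to manipulate $\thick_S$ combinatorially. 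Concretely: translate ``$\F$ is an ultrafilter on $\thick_S$'' into ``$\tilde\F$ is a maximal closed set of the form $\tilde\G$ with $\G \sub \thick_S$,'' use Proposition~\ref{prop:fact1} to identify such maximal closed sets with the minimal left ideals, and let Proposition~\ref{prop:fact2} handle the ideal-closure step. I expect that once the dictionary is set up correctly, each implication is a short argument; the danger is in being sloppy about which objects are filters on $S$ versus filters on the poset $(\thick_S, \sub^*)$.
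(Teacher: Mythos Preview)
Your overall structure is right, and you have correctly located the crux. But you have not filled the gap, and in fact you talk yourself out of the very observation that fills it. In your ``Main obstacle'' paragraph you write that for $A \cap s^{-1}A$ you ``cannot expect thickness in general.'' You can. This is exactly Lemma~\ref{lem:thickshifts} in the paper: if $A$ is $S$-thick then so is $A \cap s^{-1}A$. The proof is a one-line upgrade of the computation you already did for $s^{-1}A$ alone: given finite $F$, apply thickness of $A$ to the finite set $F \cup sF$ to get $t$ with $(F \cup sF)t \sub A$; then $Ft \sub A$ and $sFt \sub A$, so $Ft \sub A \cap s^{-1}A$. With this in hand your ``delicate spot'' dissolves: for $B \in \F$ you have $A \cap B \in \F \sub \thick_S$, hence $(A \cap B) \cap s^{-1}(A \cap B) \in \thick_S$, and this set is contained in $B \cap s^{-1}A$, so $B \cap s^{-1}A \in \thick_S$. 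Thus $\F \cup \{s^{-1}A\}$ generates a filter on $\thick_S$, maximality gives $s^{-1}A \in \F$, and Proposition~\ref{prop:fact2} finishes the left-ideal verification just as you outlined. This is precisely the paper's route (Lemmas~\ref{lem:thickshifts} and~\ref{lem:inverseimages}).

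Your proposed detour --- reinterpreting ``ultrafilter on $\thick_S$'' as ``$\tilde\F$ minimal among closed sets whose dual filter lies in $\thick_S$'' and then matching these with minimal left ideals --- can be made to work, but it is not free: you would still need to show that every closed set whose dual filter sits inside $\thick_S$ actually \emph{contains} a minimal left ideal, and your ``compactness/finite-intersection argument'' is doing real work there (one clean way uses a closed minimal right ideal $R$ and the closed sets $R_A = R \cap \bigcap_{s \in S}\closure{s^{-1}A}$). That is a legitimate alternative, but it is more machinery than the two-line combinatorial lemma you were one step away from.
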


\begin{lemma}\label{lem:thickshifts}
Let $S$ be any semigroup. If $A \sub S$ is $S$-thick, then, for any $s \in S$, $A \cap s^{-1}A$ is also $S$-thick.
\end{lemma}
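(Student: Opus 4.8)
The plan is to unwind the definition of $S$-thickness directly. Let $A \sub S$ be $S$-thick and fix $s \in S$; I want to show $A \cap s^{-1}A$ is $S$-thick, i.e., that for every finite $F \sub S$ there is some $t \in S$ with $F \cdot t \sub A \cap s^{-1}A$. The key observation is that $F \cdot t \sub s^{-1}A$ means precisely $s \cdot F \cdot t \sub A$, so the single requirement $F \cdot t \sub A \cap s^{-1}A$ is equivalent to the conjunction $F \cdot t \sub A$ and $(sF) \cdot t \sub A$, which in turn is equivalent to $(F \cup sF) \cdot t \sub A$. So the whole thing collapses: given finite $F$, apply $S$-thickness of $A$ to the finite set $F' = F \cup (s \cdot F) = F \cup \{s\} \cdot F$, obtaining $t \in S$ with $F' \cdot t \sub A$, and this same $t$ witnesses $F \cdot t \sub A \cap s^{-1}A$.

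Let me record the one-line computation that makes the two halves line up. For any $x \in S$ and $t \in S$, $x \cdot t \in s^{-1}A \iff s \cdot (x \cdot t) \in A \iff (s \cdot x) \cdot t \in A$ (using associativity). Hence $F \cdot t \sub s^{-1}A \iff (sF) \cdot t \sub A$, where $sF = \set{s \cdot x}{x \in F}$ is finite since $F$ is. Combining with $F \cdot t \sub A$, we get $F \cdot t \sub A \cap s^{-1}A$ iff $(F \cup sF)\cdot t \sub A$. Since $A$ is $S$-thick and $F \cup sF$ is finite, such a $t$ exists, and we are done.

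There is essentially no obstacle here; the only thing to be careful about is the associativity step and the fact that $s^{-1}A$ is being used in the sense defined earlier in the paper (namely $s^{-1}A = \set{t \in S}{s \cdot t \in A}$, which does not presume $s$ has an inverse) — so "$x \in s^{-1}A$" literally unpacks to "$s \cdot x \in A$" and nothing more. One could alternatively phrase this semigroup-theoretically via Proposition~\ref{prop:fact1}: $A$ thick gives a left ideal $L \sub \closure A$, then $L = \b S \cdot L \sub \b S \cdot s^{-1}L$-type manipulations show $L \sub \closure{s^{-1}A}$ as well, hence $L \sub \closure A \cap \closure{s^{-1}A} = \closure{A \cap s^{-1}A}$, so $A \cap s^{-1}A$ is thick; but the direct combinatorial argument above is shorter and self-contained, so that is the one I would write out.
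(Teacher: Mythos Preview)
Your proof is correct and is essentially identical to the paper's own argument: apply thickness of $A$ to the finite set $F \cup sF$ to get $t$ with $(F \cup sF)\cdot t \sub A$, then read off $F\cdot t \sub A$ and $F\cdot t \sub s^{-1}A$. The alternative route via Proposition~\ref{prop:fact1} that you sketch is not what the paper does, and the paper likewise opts for the direct combinatorial verification.
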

\begin{proof}
Fix $A \in \thick_S$, $s \in S$, and let $F$ be any finite subset of $S$. $F \cup (s \cdot F)$ is also finite, so there is some $t \in S$ such that $(F \cup (n \cdot F)) \cdot t = (F \cdot t) \cup (s \cdot F \cdot t) \sub A$. Then $F \cdot t \sub A$ and $s \cdot F \cdot t \sub A$. The latter implies $F \cdot t \sub s^{-1}A$, so we get $F \cdot t \sub A \cap s^{-1}A$. Since $F$ was arbitrary, $A \cap s^{-1}A$ is $S$-thick.
\end{proof}

\begin{lemma}\label{lem:inverseimages}
Let $S$ be any semigroup and $s \in S$. If $\F$ is an ultrafilter on $\thick_S$ and if $A \in \F$, then $s^{-1}A \in \F$.
\end{lemma}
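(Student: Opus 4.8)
The plan is to prove the contrapositive flavored version directly: assuming $\F$ is an ultrafilter on $\thick_S$ and $A \in \F$, I want to show $s^{-1}A \in \F$. Since $\F$ is an ultrafilter on the semifilter $\thick_S$, and ultrafilters on a semifilter are maximal filters contained in it, the natural strategy is to show that $s^{-1}A$ (or rather $A \cap s^{-1}A$) belongs to $\F$ by a maximality argument: if $s^{-1}A \notin \F$, then by maximality $\F$ cannot be extended, so there should be some $B \in \F$ with $B \cap s^{-1}A \notin \thick_S$ or $B \cap s^{-1}A$ not co-something; I need to derive a contradiction.

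First I would observe that by Lemma~\ref{lem:thickshifts}, $A \cap s^{-1}A$ is $S$-thick whenever $A$ is. The key point is then to argue that $A \cap s^{-1}A \in \F$. Suppose not. Because $\F$ is an \emph{ultra}filter on $\thick_S$ — i.e., a maximal filter among filters contained in $\thick_S$ — the failure of $A \cap s^{-1}A$ to lie in $\F$ must be witnessed: there must be some $B \in \F$ such that no element of $\thick_S$ sits below both $B$ and $A \cap s^{-1}A$ in $(\thick_S, \sub^*)$ while still being compatible with $\F$ — more precisely, $\F \cup \{A \cap s^{-1}A\}$ generates a filter (since $A \cap s^{-1}A$ meets every member of $\F$ in an $S$-thick, hence infinite, set — here I'd want to check that $B \cap A \cap s^{-1}A$ is $S$-thick for every $B \in \F$, which again follows from Lemma~\ref{lem:thickshifts} applied to $B \cap A \in \F \subseteq \thick_S$, giving $B \cap A \cap s^{-1}(B\cap A) \sub B \cap A \cap s^{-1}A$ is $S$-thick), and this generated filter is still contained in $\thick_S$ because it is generated by finite intersections of $S$-thick sets of the form $B \cap A \cap s^{-1}A$ which we've just shown are $S$-thick. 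That contradicts maximality of $\F$, unless $A \cap s^{-1}A$ was already in $\F$. Hence $A \cap s^{-1}A \in \F$, and therefore $s^{-1}A \in \F$ by upward heredity.

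The main obstacle I expect is the bookkeeping around what it means for $\F$ to be an ultrafilter on the semifilter $\thick_S$: I need the fact (which the excerpt points to via \cite{B&V}) that $\F$ is an ultrafilter on $\thick_S$ iff for every $S$-thick set $X$, either $X \in \F$ or there is $B \in \F$ with $B \cap X$ \emph{not} $S$-thick (equivalently, $X$ is incompatible with $\F$ relative to $\thick_S$). Granting that characterization, the proof is short: given $A \in \F$, the candidate $X = A \cap s^{-1}A$ is $S$-thick by Lemma~\ref{lem:thickshifts}, and for every $B \in \F$ we have $B \cap A \in \F \sub \thick_S$, so by Lemma~\ref{lem:thickshifts} again $B \cap A \cap s^{-1}(B \cap A)$ is $S$-thick, and since $B \cap A \cap s^{-1}(B \cap A) \sub B \cap X$, the set $B \cap X$ is $S$-thick too. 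So $X$ is compatible with $\F$ relative to $\thick_S$, forcing $X \in \F$ by maximality, and then $s^{-1}A \supseteq X$ gives $s^{-1}A \in \F$. I would double-check the one subtle inclusion $s^{-1}(B\cap A) \subseteq s^{-1}A$ (immediate from $B \cap A \subseteq A$) and that the relevant notion of ``compatible'' for ultrafilters on a partial order is exactly the one from \cite{B&V}; that reference-chasing is where care is needed, but no real difficulty lurks there.
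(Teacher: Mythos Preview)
Your proposal is correct and follows essentially the same route as the paper: for arbitrary $B \in \F$ you apply Lemma~\ref{lem:thickshifts} to $A \cap B \in \thick_S$ to get $(A\cap B)\cap s^{-1}(A\cap B)\in\thick_S$, use upward closure of $\thick_S$, and then invoke maximality of $\F$. The only cosmetic difference is that you first add $A\cap s^{-1}A$ to $\F$ and then pass to $s^{-1}A$ by upward heredity, whereas the paper adds $s^{-1}A$ directly; the underlying computation is identical, and the reference-chasing to \cite{B&V} you worry about is unnecessary since the maximality argument works straight from the definition given in Section~\ref{sec:prelims}.
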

\begin{proof}
Let $\F$ be an ultrafilter on $\thick_S$ and let $A \in \F$, $s \in S$. Let $B \in \F$. Then $A \cap B \in \F \sub \thick_S$, and by Lemma~\ref{lem:thickshifts} we have
$$(A \cap B) \cap s^{-1}(A \cap B) \in \thick_S.$$
Since $\thick_S$ is closed upwards under $\sub$ and
$$B \cap s^{-1}A \supseteq (A \cap B) \cap s^{-1}(A \cap B),$$
we have $B \cap s^{-1}A \in \thick_S$. Since $B$ was an arbitrary member of $\F$, this means that the filter $\F'$ generated by $\F \cup \{s^{-1}A\}$ is a filter on $\thick_S$. But $\F$ is an ultrafilter on $\thick_S$, and clearly $\F \sub \F'$. Thus $\F = \F'$, which gives $s^{-1}A \in \F$.
\end{proof}

\begin{proof}[Proof of Proposition~\ref{prop:thicksets}]
First, recall that every minimal left ideal $L$ of $\b S$ is closed. By Stone duality, if $L$ is a closed left ideal then there is some filter $\F$ with $L = \tilde \F$. Therefore, $\tilde \F$ is a minimal left ideal if and only if $\tilde \F$ is a left ideal, and there is no filter $\G$ properly extending $\F$ such that $\tilde \G$ is also a left ideal.

By Proposition~\ref{prop:fact1}, $A$ is $S$-thick if and only if $\closure{A}$ contains a (minimal) left ideal. Thus, if $\tilde \F$ is a left ideal then $\F \sub \thick_S$; i.e., $\F$ is a filter on $\thick_S$. Given this fact and the argument of the previous paragraph, it suffices to show that if $\F$ is an ultrafilter on $\thick_S$ then $\tilde \F$ is a left ideal.

By Proposition~\ref{prop:fact2}, it suffices to show that if $\F$ is an ultrafilter on $\thick_S$ then for every $s \in S$ we have $s \cdot \tilde \F \sub \tilde \F$. Indeed,
\begin{align*}
s \cdot \tilde \F = s \cdot \bigcap_{A \in \F}\closure{A} & \sub \bigcap_{A \in \F}s \cdot \closure{A} = \bigcap_{A \in \F}\closure{s \cdot A} \\
& \sub \bigcap_{A \in \F}\closure{s \cdot (s^{-1}A)} \sub \bigcap_{A \in \F}\closure{A} = \tilde \F.
\end{align*}
The first and last equalities are just the definition of $\tilde \F$. The middle equality it true because, by the continuity of $x \mapsto s \cdot x$, we have $s \cdot \closure{A} = \closure{s \cdot A}$ for every $A \sub S$. The first inclusion is obvious, the second is true by Lemma~\ref{lem:inverseimages}, and the third follows from the fact that $s \cdot s^{-1}A \sub A$ for every $A \sub S$ (which is clear from the definition of $s^{-1}A$).
\end{proof}

\begin{lemma}\label{lem:itsasemifilter}
Let $S$ be a countable semigroup. If $S$ is weakly cancellative, then $\thick_S$ is a comeager semifilter.
\end{lemma}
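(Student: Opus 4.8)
The plan is to check the two defining features of $\thick_S$ in turn: that it is a semifilter, and that it is comeager once $\mathcal P(S)$ is identified with $2^S \iso 2^\w$. Nontriviality is immediate: $S \in \thick_S$, and $\0 \notin \thick_S$ because for any nonempty finite $F$ there is no $s$ with $F \cdot s \sub \0$. The content of the semifilter claim is thus upward $\sub^*$-heredity: if $A \in \thick_S$ and $D := A \setminus B$ is finite, I must show $B \in \thick_S$. Rather than manipulate witnesses directly, I would pass through Proposition~\ref{prop:fact1}: since $A$ is $S$-thick there is a minimal left ideal $L$ of $\b S$ with $L \sub \closure A$, and by Proposition~\ref{prop:fact3}(1) (this is where weak cancellativity is used) $L \sub S^* = \b S \setminus S$, so $L \cap D = \0$ as $D \sub S$. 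Since $A \sub B \cup D$ with $D$ finite (hence closed), $\closure A \sub \closure B \cup D$; intersecting with $L$ and using $L \cap D = \0$ gives $L \sub \closure B$, so $B$ is $S$-thick by the other direction of Proposition~\ref{prop:fact1}.

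For comeagerness I would exhibit $\thick_S$ as a countable intersection of dense open sets. For a finite $F \sub S$, put $U_F = \set{A \sub S}{F \cdot s \sub A \text{ for some } s \in S}$; then $\thick_S = \bigcap \set{U_F}{F \in [S]^{<\w}}$ straight from the definition, and this intersection is countable because $S$ is countable (this is where that hypothesis is used). Each $U_F = \bigcup_{s \in S} \set{A}{F \cdot s \sub A}$ is open, being a union of the basic clopen sets $\set{A}{F \cdot s \sub A}$ (clopen since $F \cdot s$ is finite). For density, take a basic open set determined by finite disjoint $P, N \sub S$ via $P \sub A$ and $A \cap N = \0$: the set of $s$ for which $(F \cdot s) \cap N \neq \0$ is $\bigcup_{f \in F,\, n \in N}\lambda_f^{-1}(n)$, which is finite by weak cancellativity, so since $S$ is infinite we may choose $s$ outside it; then $P \cup (F \cdot s)$ meets both constraints and lies in $U_F$. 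Hence $\thick_S$ is comeager.

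The one genuinely delicate point is the $\sub^*$-heredity. For plain $\sub$ it is trivial, but deleting finitely many points from a thick set while keeping it thick is awkward to establish by juggling witnesses directly, so I prefer to route it through the left-ideal characterization of $S$-thickness together with the fact (Proposition~\ref{prop:fact3}) that minimal left ideals consist of free ultrafilters: the deleted finite set $D$ is invisible to free ultrafilters, so the left ideal witnessing thickness of $A$ also witnesses thickness of $B$. It is also worth recording that ``countable'' here must mean countably infinite --- if $S$ is finite the density step fails (for a finite left-zero semigroup $F \cdot s = F$ for every $s$, so $\thick_S = \{S\}$, not comeager in $2^S$) --- so throughout this argument I take $S$ to be infinite, as intended.
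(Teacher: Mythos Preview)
Your proof is correct and follows essentially the same approach as the paper: the semifilter verification is identical (routing $\sub^*$-heredity through Propositions~\ref{prop:fact1} and~\ref{prop:fact3}(1)), and the comeagerness argument uses the same decomposition $\thick_S = \bigcap_F U_F$ with each $U_F$ open. The only difference is cosmetic: the paper observes that $\thick_S$ itself is dense (since it contains all cofinite sets, by the $\sub^*$-heredity just proved) and hence is a dense $G_\delta$, whereas you argue directly that each $U_F$ is dense via weak cancellativity --- but these amount to the same thing, since a dense $G_\delta$ is automatically an intersection of dense open sets.
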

\begin{proof}
Clearly $\0 \neq \thick_S \neq \mathcal{P}(S)$. If $A \sub^* B$ and $A \in \thick_S$, it follows immediately from Proposition~\ref{prop:fact1} and Proposition~\ref{prop:fact3}(1) that $B \in \thick_S$. Thus $\thick_S$ is a semifilter. To prove that it is comeager, we first notice that $\thick_S$ is dense in $2^S$ (for example, because $\thick_S$ contains every co-finite set, and the set of all co-finite sets is dense in $2^S$). Therefore it suffices to show that $\thick_S$ is $G_\delta$ in $2^S$. Let
$$U_F^t = \set{X \sub S}{F \cdot t \sub X} \text{ and }$$
$$U_F = \set{X \sub S}{\exists t \in S (F \cdot t \sub X)}.$$
Each $U_F^t$ is a basic open set in $2^S$, so each $U_F = \bigcup_{t \in S}U_F^t$ is also open. By definition, $\thick_S = \bigcap \set{U_F}{F \sub S, \card{F} < \aleph_0}$.
\end{proof}

Putting these pieces together, we obtain a proof of part $(1)$ of the abridged version of our main theorem:

\begin{theorem}\label{thm:Psets}
If $S$ is weakly cancellative, then there is a minimal left ideal of $\b S$ that is a weak $P$-set in $S^*$.
\end{theorem}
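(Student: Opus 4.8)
The plan is to assemble the pieces already in place. By Lemma~\ref{lem:itsasemifilter}, weak cancellativity of the countable semigroup $S$ guarantees that $\thick_S$ is a comeager semifilter on $S$. Since $S$ is countable, we may identify $S$ with $\w$ and apply Proposition~\ref{prop:weakPsets} to the comeager semifilter $\thick_S$: this yields an ultrafilter $\F$ \emph{on} $\thick_S$ such that $\tilde\F$ is a weak $P$-set in $\w^* = S^*$. The point is that ``ultrafilter on $\thick_S$'' in the sense of Proposition~\ref{prop:weakPsets} means a maximal filter contained in $\thick_S$, which is exactly the hypothesis needed for Proposition~\ref{prop:thicksets}.

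Next I would invoke Proposition~\ref{prop:thicksets}: since $\F$ is an ultrafilter on $\thick_S$, its Stone dual $\tilde\F$ is a minimal left ideal of $\b S$. Combined with the previous paragraph, $\tilde\F$ is simultaneously a minimal left ideal of $\b S$ and a weak $P$-set in $S^*$. Finally, since $S$ is weakly cancellative, Proposition~\ref{prop:fact3}(1) tells us that every minimal left ideal of $\b S$ is contained in $S^*$, so $\tilde\F \sub S^*$ and the phrase ``weak $P$-set in $S^*$'' makes literal sense. Setting $L = \tilde\F$ completes the argument.

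The only subtlety worth dwelling on is the bookkeeping around the word ``semifilter'' versus ``filter'' and the scope of Proposition~\ref{prop:weakPsets}: that proposition produces an ultrafilter \emph{on} a semifilter, and one must check this is the notion of ``ultrafilter on $\thick_S$'' that Proposition~\ref{prop:thicksets} consumes. Both are defined in Section~\ref{sec:prelims} as maximal filters $\F$ with $\F\sub\S$, equivalently maximal filters on the partial order $(\S,\sub^*)$, so there is no mismatch. The other small point is that $\thick_S$ is genuinely a proper semifilter (neither empty nor all of $\mathcal{P}(S)$) and is upward closed under $\sub^*$, both of which were verified in Lemma~\ref{lem:itsasemifilter}; in particular a filter on $\thick_S$ exists, so ``ultrafilter on $\thick_S$'' is not vacuous. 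I do not anticipate a genuine obstacle here — this theorem is exactly the place where the set-theoretic input from \cite{B&V} (Proposition~\ref{prop:weakPsets}) is married to the semigroup-theoretic characterization of minimal left ideals (Proposition~\ref{prop:thicksets}), and all the real work has been done in the lemmas leading up to it.
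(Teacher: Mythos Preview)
Your proposal is correct and matches the paper's own proof essentially verbatim: the paper simply says the theorem follows immediately from Proposition~\ref{prop:weakPsets}, Proposition~\ref{prop:thicksets}, and Lemma~\ref{lem:itsasemifilter}. Your extra remarks about the compatibility of the two notions of ``ultrafilter on $\thick_S$'' and the containment $\tilde\F\sub S^*$ via Proposition~\ref{prop:fact3}(1) are accurate elaborations that the paper leaves implicit.
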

\begin{proof}
This follows immediately from Proposition~\ref{prop:weakPsets}, Proposition~\ref{prop:thicksets}, and Lemma~\ref{lem:itsasemifilter}.
\end{proof}

As mentioned in the introduction, left cancellativity is necessary for proving parts $(2)$ and $(3)$ of our main theorem. Notice that we have not yet used this assumption, and the next lemma is the only place where it is used in our argument. Thus parts $(2)$ and $(3)$ of our theorem are true for any weakly cancellative semigroup satisfying the conclusion of the following lemma.

\begin{lemma}\label{lem:exercise}
Suppose $S$ is left cancellative, let $L$ be a minimal left ideal of $\b S$, and $p \notin L$. For every $s \in S$, $s \cdot p \notin L$.
\end{lemma}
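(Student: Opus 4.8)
The plan is to prove the contrapositive statement in a slightly disguised form: I will show that if $s \cdot p \in L$, then $p \in L$. So fix $s \in S$ and suppose $s \cdot p \in L$. By Stone duality, $L = \tilde\F$ for some filter $\F$, and by Proposition~\ref{prop:thicksets} this $\F$ is an ultrafilter on $\thick_S$. The condition $s \cdot p \in L = \tilde\F$ says exactly that $\F \sub s \cdot p$, i.e., every $A \in \F$ satisfies $A \in s \cdot p$, which by the explicit formula for the product unwinds to: for every $A \in \F$, the set $\set{t \in S}{t^{-1}A \in p}$ is nonempty (in fact equal to $S$ is not claimed — just nonempty, since $s$ witnesses it when we use $\lambda_s$; more precisely $s \in \set{t}{t^{-1}A \in p}$). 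Thus $s^{-1}A \in p$ for every $A \in \F$.

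The key step is then to push this back down. I want to conclude $A \in p$ for every $A \in \F$, which gives $\F \sub p$, i.e., $p \in \tilde\F = L$, contradicting $p \notin L$. Here is where left cancellativity enters: when $\lambda_s$ is injective, the map $A \mapsto s^{-1}A$ has a one-sided inverse behavior, and more to the point $s \cdot s^{-1}A = A \cap (s \cdot S)$ — but we need a cleaner handle. The cleanest route is: since $A \in \F \sub \thick_S$, Lemma~\ref{lem:thickshifts} gives that $A \cap s^{-1}A \in \thick_S$, and in fact I would argue directly that for an ultrafilter $\F$ on $\thick_S$, knowing $s^{-1}A \in p$ for all $A \in \F$ already forces $p \supseteq \F$, because otherwise some $B \in \F$ has $B \notin p$, and then $s^{-1}B \in p$ while $B \notin p$; applying Lemma~\ref{lem:thickshifts} to $B$ repeatedly (or rather using left cancellativity to see $s^{-1}B$ is the ``shift'' of $B$ that cannot be larger than $B$ modulo the injectivity of $\lambda_s$) should yield the contradiction. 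Concretely, left cancellativity makes $\lambda_s \colon S \to s\cdot S$ a bijection, so $s^{-1}(s\cdot C) = C$ for every $C \sub S$; thus the operation $A \mapsto s^{-1}A$ on subsets of $s\cdot S$ is a genuine inverse, and combined with $S$-thickness of each $A \in \F$ — which by Proposition~\ref{prop:fact1} means $\closure A$ contains a whole minimal left ideal, hence $\closure{s^{-1}A} \supseteq s^{-1}(\text{that ideal})$ which is again a left ideal — I can identify $\set{s^{-1}A}{A \in \F}$ with an ultrafilter on $\thick_S$ whose Stone dual is again a minimal left ideal, and minimality plus $s\cdot\tilde\F \sub \tilde\F$ forces it to equal $\tilde\F$.

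I expect the main obstacle to be the last identification: showing that the ``shifted'' filter generated by $\set{s^{-1}A}{A\in\F}$ is still an ultrafilter on $\thick_S$ with Stone dual equal to $L$, rather than some other minimal left ideal. The subtlety is that $s^{-1}A$ need not be a subset of $s\cdot S$, so the bijectivity of $\lambda_s$ only controls part of it; I will need Lemma~\ref{lem:inverseimages} (which already tells us $s^{-1}A \in \F$ when $\F$ is an ultrafilter on $\thick_S$ and $A \in \F$) to close the loop — indeed that lemma shows $\set{s^{-1}A}{A\in\F} \sub \F$, so the filter it generates is contained in $\F$, hence equals $\F$ by maximality on $\thick_S$. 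Then $\F \sub p$ follows immediately from $s^{-1}A \in p$ for all $A \in \F$ together with $\set{s^{-1}A}{A\in\F}$ generating $\F$, giving $p \in L$, the desired contradiction. So the real content is just combining Lemma~\ref{lem:inverseimages} with the unwinding of $s\cdot p \in L$ via the product formula; left cancellativity is what guarantees $\set{t}{t^{-1}A\in p} \ni s$ can be leveraged, though I should double-check whether plain weak cancellativity secretly suffices here — the paper's remark that this lemma is the unique place left cancellativity is used suggests it genuinely does not.
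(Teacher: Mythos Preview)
Your final argument has a genuine gap, and it is precisely at the step where you invoke maximality. You correctly observe (via Lemma~\ref{lem:inverseimages}) that $\set{s^{-1}A}{A \in \F} \sub \F$, so the filter this family generates is contained in $\F$. But maximality of $\F$ as an ultrafilter on $\thick_S$ runs the other way: it says no filter on $\thick_S$ \emph{properly extends} $\F$, not that no filter on $\thick_S$ is properly contained in $\F$. There is no reason the subfilter generated by $\set{s^{-1}A}{A \in \F}$ should be all of $\F$; in general it will be strictly smaller. So from ``$s^{-1}A \in p$ for every $A \in \F$'' you cannot conclude $\F \sub p$.

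A second warning sign is that your eventual argument never actually uses left cancellativity. The unwinding $A \in s \cdot p \Leftrightarrow s^{-1}A \in p$ is just the product formula, and Lemma~\ref{lem:inverseimages} holds for arbitrary semigroups. Since the paper singles out this lemma as the unique place left cancellativity is used, any correct proof must invoke it somewhere essential; yours does not.

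The paper's proof takes a completely different and much shorter route. Left cancellativity makes $\lambda_s$ injective on $S$, hence (by continuity and density) injective on all of $\b S$. If $s \cdot p \in L$, the Structure Theorem for minimal left ideals places $s \cdot p$ inside a group in $L$, so there is an idempotent $e \in L$ with $s \cdot p \cdot e = s \cdot p$. Now cancel the $s$ on the left using injectivity of $\lambda_s$ to get $p \cdot e = p$; since $e \in L$ and $L$ is a left ideal, $p = p \cdot e \in L$. The algebraic structure of the minimal left ideal does all the work, and the filter-theoretic description via $\thick_S$ is not needed here at all.
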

\begin{proof}
This is a slight modification of Exercise 8.2.2(ii) in \cite{H&S}.

Since $S$ is left cancellative, the function $x \mapsto s \cdot x$ is injective on $S$. It follows that the function $s \mapsto s \cdot x$ is also injective on $\b S$. Fix a minimal left ideal $L$, and suppose $s \cdot p \in L$. By the Structure Theorem (see, e.g., Theorem 1.64 in \cite{H&S}), $s \cdot p$ is a member of some group contained in $L$. In particular, there is some $e \in L$ such that $s \cdot p \cdot e = s \cdot p$. But we have already said that the function $x \mapsto s \cdot x$ is injective, so $p = p \cdot e$. Since $e \in L$ and $L$ is a left ideal, $p \cdot e \in L$.
\end{proof}

We can now prove part $(2)$ of the abridged version of our main theorem:

\begin{theorem}\label{thm:betaS}
Suppose $S$ is left-cancellative and weakly right cancellative, and that $L$ is a minimal left ideal of $S^*$ that is also a weak $P$-set. Then $L$ is prime.
\end{theorem}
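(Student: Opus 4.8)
The plan is to prove primality in its contrapositive form: fix $q \in S^* \setminus L$, and show that $p \cdot q \notin L$ for every $p \in \b S$. Two ingredients are already available. First, since $S$ is left-cancellative, Lemma~\ref{lem:exercise} gives $s \cdot q \notin L$ for every $s \in S$. Second, a left-cancellative, weakly right-cancellative semigroup is weakly cancellative, so by Proposition~\ref{prop:fact3}(1) $S^*$ is a two-sided ideal of $\b S$, and in particular $s \cdot q \in S^*$ for every $s \in S$. Hence the set
$$D = \set{s \cdot q}{s \in S} = \rho_q[S]$$
is contained in $S^* \setminus L$, and --- here is the only place countability is used --- $\card{D} \leq \card{S} = \aleph_0$.

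Next I would feed $D$ into the weak $P$-set hypothesis. Since $D \sub S^* \setminus L$ is countable and $L$ is a weak $P$-set in $S^*$, the closure of $D$ in $S^*$ misses $L$; as $L \sub S^*$, this is the same as $\closure{D} \cap L = \0$, where $\closure{D}$ now denotes closure in $\b S$. It remains to see that $p \cdot q$ lies in $\closure{D}$, and this is immediate: the map $\rho_q \colon x \mapsto x \cdot q$ is continuous on $\b S$ (this continuity is part of the very definition of the extended operation), and $S$ is dense in $\b S$, so
$$\rho_q\!\left[\b S\right] = \rho_q\!\left[\closure{S}\right] \sub \closure{\rho_q[S]} = \closure{D}.$$
Therefore $p \cdot q = \rho_q(p) \in \closure{D}$, and since $\closure{D} \cap L = \0$ we conclude $p \cdot q \notin L$. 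As $p$ was arbitrary, $L$ is prime.

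I do not expect a serious obstacle here: the substance of the theorem has already been isolated into Lemma~\ref{lem:exercise} (the sole use of left-cancellativity) and into the construction of the weak $P$-set (Theorem~\ref{thm:Psets}, via Proposition~\ref{prop:weakPsets}). The points that need a little care are purely bookkeeping --- we need $D$ to land inside $S^* \setminus L$, which is why weak right-cancellativity is assumed alongside left-cancellativity (to make $S^*$ an ideal), and we need $\card{D}$ small enough for the weak $P$-set property to bite, which is why $S$ is taken countable. In Section~\ref{sec:general} the same three-line argument will reappear with ``$P$-set'' replaced by ``$P_{\k^+}$-set'', since then $\card{D} \leq \card{S} = \k < \k^+$.
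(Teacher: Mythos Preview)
Your argument is correct and is essentially the paper's own proof, only spelled out with a bit more care: you make explicit that $D = S \cdot q \sub S^*$ (needed so that the weak $P$-set hypothesis, which is stated relative to $S^*$, actually applies), whereas the paper leaves this implicit. The core steps---Lemma~\ref{lem:exercise} to get $D \cap L = \0$, then the weak $P$-set property to get $\closure{D} \cap L = \0$, then continuity of $\rho_q$ to conclude $\b S \cdot q \sub \closure{D}$---are identical.
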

\begin{proof}
Suppose $L$ is both a weak $P$-set and a minimal left ideal. By Lemma~\ref{lem:exercise}, if $p \notin L$ then $(S \cdot p) \cap L = \0$. Since $L$ is a weak $P$-set, $\b S \cdot p = \closure S^{\b S} \cdot p = \closure{S \cdot p}^{\b S}$ is disjoint from $L$ (the last equality follows from the continuity of the map $x \mapsto x \cdot p$). Therefore, if $p \notin L$ then $q \cdot p \notin L$. In other words, $L$ is prime.
\end{proof}

We showed already (in Proposition~\ref{prop:2implies3}) that part $(2)$ of our main theorem implies part $(3)$, so this concludes the proof of the abridged version of our main theorem.

Before moving on to the unabridged version, we will give some examples that show left cancellativity is necessary for parts $(2)$ and $(3)$.

Let $(F,\star)$ be any finite semigroup, and let $S$ be the product of $(\w,\max)$ with $(F,\star)$. Explicitly, $S = (\w \times F,\cdot)$, where
$$(m,a) \cdot (n,b) = (\max(m,n),a \star b).$$
This is easily seen to be a weakly cancellative semigroup.

Because $F$ is finite, there is a natural identification of the ultrafilters on $\w \times F$ with $\b\w \times F$. To see this, let $\F$ be an ultrafilter on $\w \times F$ and note that, since $F$ is finite, $\w \times \{a\} \in \F$ for exactly one $a \in F$. If $p = \set{A \sub \w}{A \times \{a\} \in \F}$, then $p \in \b\w$, and we identify $\F$ with the pair $(p,a)$. Using this notation, we have $\b S = (\b\w \times F,\cdot)$.

For each $a \in F$, define the function $\Lambda_a$ on $\b\w \times F$ by
$$\Lambda_a(p,x) = (p,a \star x).$$
Note that $\Lambda_a$ is continuous. For $a \in F$ and $q \in \b\w$, define
$$P_a^q(p,x) = (q,x \star a),$$
and note that this function is also continuous.

For any infinite $A \sub \w \times F$ and any $(m,a) \in \w \times F$,
\begin{align*}
(m,a) \cdot A & = \set{(\max(m,n),a \star b)}{(n,b) \in A} \\
& =^* \set{(n,a \star b)}{(n,b) \in A} = \Lambda_a(A),
\end{align*}
where, as usual, $X =^* Y$ means that $X \sub^*Y$ and $Y \sub^* X$. Since this is true for every infinite $A$, the function $x \mapsto (m,a) \cdot x$ must be equal to $\Lambda_a$ on $S^*$. That is, $(m,a) \cdot (q,b) = \Lambda_a(q,b) = (q,a \star b)$ for every $q \in \w^*$. Fixing $(q,b)$, we have showed that the function $x \mapsto x \cdot (q,b)$ is equal to $P_b^q$ on $\w \times F$. Since $x \mapsto x \cdot (q,b)$ must be continuous on all of $\b\w \times F$, it must be equal to $P_b^q$ everywhere. Explicitly,
$$(p,a) \cdot (q,b) = P_b^q(p,a) = (q,a \star b)$$
for every $p,q \in \w^*$ and $a,b \in F$. We now have a complete description of the semigroup $\b S$.

\begin{proposition}
There is a countable weakly cancellative semigroup $S$ such that no minimal left ideal of $\b S$ is prime and no minimal idempotent is maximal.
\end{proposition}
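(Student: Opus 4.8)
The plan is to specialize the construction above to the two-element semilattice $F = (\{0,1\},\star)$ in which $0$ is absorbing and $1$ is an identity (so $0\star 0 = 0\star 1 = 1\star 0 = 0$ and $1\star 1 = 1$), and to take $S = (\w\times\{0,1\},\cdot)$ with $(m,a)\cdot(n,b) = (\max(m,n),a\star b)$. This $S$ is countable, it is weakly cancellative as already noted, and the computation carried out above identifies $\b S$ with $(\b\w\times\{0,1\},\cdot)$, where $(p,a)\cdot(q,b) = (q,a\star b)$ for all $p,q\in\w^*$ and all $a,b\in\{0,1\}$.

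The first step is to pin down the minimal left ideals and the minimal idempotents of $\b S$. For each $q\in\w^*$ the set $\{q\}\times\{0\}$ is a left ideal, since $(p,a)\cdot(q,0) = (q,a\star 0) = (q,0)$ for every $(p,a)\in\b S$; being a singleton it is a minimal left ideal, and its sole element $(q,0)$ is idempotent because $0\star 0 = 0$. Conversely, let $L$ be any minimal left ideal. By Proposition~\ref{prop:fact3}(1), $L\sub S^* = \w^*\times\{0,1\}$, so fix $(p,a)\in L$ with $p\in\w^*$. From $(p,0)\cdot(p,a) = (p,0\star a) = (p,0)$ we get $(p,0)\in\b S\cdot(p,a)\sub L$, and then $\{p\}\times\{0\} = \b S\cdot(p,0)\sub L$; since $\{p\}\times\{0\}$ is itself a left ideal, minimality of $L$ forces $L = \{p\}\times\{0\}$. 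Hence the minimal left ideals of $\b S$ are exactly the singletons $\{q\}\times\{0\}$ with $q\in\w^*$, and the minimal idempotents are exactly the points $(q,0)$, $q\in\w^*$.

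Granting this, both conclusions reduce to one-line computations in the explicit multiplication. No minimal left ideal $L = \{q\}\times\{0\}$ is prime, because $(q,1)\notin L$ while $(q,0)\cdot(q,1) = (q,0\star 1) = (q,0)\in L$. And no minimal idempotent $(q,0)$ is maximal: the point $(q,1)$ is also idempotent since $1\star 1 = 1$, and $(q,0)\cdot(q,1) = (q,0) = (q,1)\cdot(q,0)$, so $(q,0)\leq_L(q,1)$ and $(q,0)\leq_R(q,1)$, i.e. $(q,0)\leq(q,1)$; since $(q,1)\cdot(q,0) = (q,0)\neq(q,1)$ we have $(q,1)\not\leq_L(q,0)$, so $(q,0)$ lies strictly below the idempotent $(q,1)$ and is thus neither maximal nor left-maximal.

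I expect the only step requiring genuine care to be the identification of the minimal left ideals: one must rule out minimal left ideals meeting the principal part of $\b S$ and minimal left ideals that keep a second coordinate equal to $1$, which is precisely where Proposition~\ref{prop:fact3}(1) and the asymmetry between $F\star 0 = \{0\}$ and $F\star 1 = F$ come in. Everything after that is direct arithmetic in the formula $(p,a)\cdot(q,b) = (q,a\star b)$.
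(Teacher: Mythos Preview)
Your proof is correct and follows essentially the same route as the paper: you choose the same semigroup (the paper writes $\star$ as ordinary multiplication on $\{0,1\}$, which is exactly your absorbing-$0$/identity-$1$ semilattice), identify the minimal left ideals as the singletons $\{(q,0)\}$ with $q\in\w^*$, and then use the explicit product formula to witness non-primeness and the strict relation $(q,0)<(q,1)$. Your derivation of the minimal left ideals is in fact a bit more self-contained than the paper's, which leans implicitly on Ellis's theorem via the classification of minimal idempotents.
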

\begin{proof}
Let $F = \{0,1\}$, let $\star$ denote multiplication, and let $S$ be the semigroup described above.

Every element of $S^*$ is idempotent, since $(p,0) \cdot (p,0) = (p,0)$ and $(p,1) \cdot (p,1) = (p,1)$. Notice that $(p,0) \cdot (p,1) = (p,1) \cdot (p,0) = (p,0)$, so $(p,0) \leq (p,1)$ for every $p \in \w^*$. Thus nothing of the form $(p,0)$ is maximal, and nothing of the form $(p,1)$ is minimal.

If $p \in \w^*$ and $(q,a) \in \w^* \times \{0,1\}$, then $(q,a) \cdot (p,0) = (p,0)$. Thus $\{(p,0)\}$ is a minimal left ideal. By the previous paragraph, every minimal left ideal has this form. However, for any $q \in \w^*$ we have $(q,0) \cdot (p,1) = (p,0)$, so that $\{(p,0)\}$ is not prime.
\end{proof}

This shows that parts $(2)$ and $(3)$ of our main theorem can both fail for weakly cancellative semigroups. It can also happen that $(2)$ fails and $(3)$ holds (though Proposition~\ref{prop:2implies3} forbids the reverse situation).

\begin{proposition}
There is a countable weakly cancellative semigroup $S$ such that no minimal left ideal of $\b S$ is prime, but $\b S$ still contains idempotents that are both minimal and left-maximal.
\end{proposition}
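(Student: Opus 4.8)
The plan is to take $S = (\N,+) \times F$ for a carefully chosen finite semigroup $F$. The factor $(\N,+)$ (with $\N = \{1,2,\dots\}$, hence with no identity) is countable, cancellative, and left-cancellative, so our abridged main theorem applies to it and furnishes an idempotent of $\b(\N,+)$ that is simultaneously minimal and left-maximal; $F$ will supply just enough extra structure to wreck primeness of every minimal left ideal of $\b S$ while introducing no further idempotents.

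First I would pin down $\b S$. Since $F$ is finite, $\b\!\left((\N,+)\times F\right)\iso\b(\N,+)\times F$ as left-topological semigroups, with operation $(p,u)\cdot(q,v)=(p+q,\,u\cdot v)$; this is computed exactly as in the $(\w,\max)\times F$ example above, the only change being that the first coordinate now retains the genuine operation of $\b(\N,+)$ (there is no collapse, as there was with $\max$). From this description one reads off: the minimal left ideals of $\b S$ are precisely the products $L\times M$ with $L$ a minimal left ideal of $\b(\N,+)$ and $M$ a minimal left ideal of $F$; the idempotents of $\b S$ are precisely the pairs $(p,u)$ with $p+p=p$ and $u\cdot u=u$; and, for idempotents, $(p,u)\leq_L(r,w)$ iff $p\leq_L r$ in $\b(\N,+)$ and $u\cdot w=u$ in $F$.

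These observations dictate what $F$ must be. For the direct-product operation, $L\times M$ is prime iff both $L$ and $M$ are prime, and since $\b(\N,+)$ does have a prime minimal left ideal, ``$\b S$ has no prime minimal left ideal'' is equivalent to ``$F$ has no prime minimal left ideal''; as a completely simple semigroup has all its minimal left ideals prime, this forces the kernel $K(F)$ (smallest two-sided ideal) to be a \emph{proper} subsemigroup of $F$. On the other hand, if $F$ has a \emph{unique} idempotent $e$, then $(p,e)$ is an idempotent of $\b S$ exactly when $p$ is an idempotent of $\b(\N,+)$, and it is left-maximal exactly when $p$ is; so a minimal left-maximal idempotent $p_0$ of $\b(\N,+)$ yields the minimal left-maximal idempotent $(p_0,e)$ of $\b S$. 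The crux is therefore to find a finite $F$ with exactly one idempotent whose kernel is a proper subsemigroup with a non-prime (necessarily unique) minimal left ideal. I would take $F=\{e,a,z\}$, let $\{e,a\}$ be the two-element group $\mathbb{Z}_2$ ($a\cdot a=e$), let $\phi\colon F\to\{e,a\}$ fix $e,a$ and send $z\mapsto a$, and set $u\cdot v:=\phi(u)\cdot\phi(v)$ computed in $\mathbb{Z}_2$: associativity is immediate since $\phi$ fixes $\{e,a\}$ and all products land there; $\{e,a\}$ is a proper two-sided ideal and hence the unique minimal left ideal; the only idempotent is $e$ (from $u=u\cdot u=\phi(u)\cdot\phi(u)$ one gets $\phi(u)=e$, so $u=e$); and $\{e,a\}$ is not prime, since $e\cdot z=a\in\{e,a\}$ while $z\notin\{e,a\}$. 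A routine check shows $S=(\N,+)\times F$ is countable and weakly cancellative ($\lambda_s,\rho_s$ are at most $2$-to-one), though not left-cancellative.

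Assembling: every minimal left ideal of $\b S$ has the form $L\times\{e,a\}$ with $L$ a minimal left ideal of $\b(\N,+)$, and it is not prime because for any $p$ and any $q\in L$ we have $(p,e)\cdot(q,z)=(p+q,\,a)\in L\times\{e,a\}$ while $(q,z)\notin L\times\{e,a\}$. Taking $p_0$ a minimal left-maximal idempotent of $\b(\N,+)$ (from our abridged main theorem, equivalently from Proposition~\ref{prop:2implies3}), the idempotent $(p_0,e)$ lies in the minimal left ideal $(\b(\N,+)\cdot p_0)\times\{e,a\}$ and is left-maximal in $\b S$, hence is simultaneously minimal and left-maximal. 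The one genuinely delicate point is the discovery of $F$: one has to see that $K(F)$ must be a nontrivial group — a zero would be $\leq_L$-below every idempotent and so destroy left-maximality — yet still a proper ideal so that primeness can be broken, and that $\mathbb{Z}_2$ with a single ``doubled'' element is the minimal such example. The remaining $\b S$-computations are then routine, following the pattern of the $(\w,\max)\times F$ example.
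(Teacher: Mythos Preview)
Your argument is correct, but it takes a rather different route from the paper's. The paper uses the same $(\w,\max)\times F$ template as the preceding proposition, now with $F=\{0,1\}$ equipped with the constant-zero operation $a\star b=0$. In that example the first coordinate collapses completely in $S^*$ (as computed just before the two propositions), so the idempotent and ideal structure of $S^*$ is read off directly from $F$: the only idempotents are the points $(p,0)$, each $\{(p,0)\}$ is a minimal left ideal, none is prime because $(q,a)\cdot(p,1)=(p,0)$, and every idempotent is trivially both minimal and left-maximal since they are all $\leq_L$-incomparable. This is entirely elementary and self-contained.

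Your construction instead keeps a genuinely nontrivial first coordinate $(\N,+)$ and then \emph{invokes the abridged main theorem} to supply a minimal left-maximal idempotent in $\b(\N,+)$, which you then pair with the unique idempotent of your three-element $F$. This works, and your product analysis (minimal left ideals, idempotents, primeness) is sound. The trade-off is that the paper's example stands on its own as evidence that left cancellativity is needed, whereas yours leans on the very theorem whose sharpness the example is meant to illustrate; logically fine, but less satisfying as an independent witness. One small correction to your heuristic: you say a zero in $K(F)$ would destroy left-maximality, but the paper's $F$ has $0$ as a zero and this causes no problem precisely because $0$ is the \emph{only} idempotent. The real constraint is just that $F$ have a unique idempotent and a non-prime minimal left ideal, and the constant-zero two-element semigroup already achieves this with less machinery than your $\mathbb{Z}_2$-with-a-doubled-element.
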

\begin{proof}
Let $F = \{0,1\}$, let $\star$ be the trivial binary operator that maps every pair to $0$, and let $S$ be the semigroup described above.

The idempotents of $S^*$ are precisely the points of the form $(p,0)$. As in the proof of the previous proposition, the minimal left ideals are precisely the sets of the form $\{(p,0)\}$. Thus every idempotent is minimal, left-maximal, and right-maximal. However, no minimal left ideal is prime, since $(q,0) \cdot (p,1) = (q,1) \cdot (p,1) = (p,0)$ for every $q$.
\end{proof}

\section{A more general lemma}\label{sec:lemma}

The proof of the unabridged version of our main theorem is essentially the same as that of the abridged version, once we have obtained the appropriate generalization of Proposition~\ref{prop:weakPsets}.

For the rest of this section, $S$ is a set (its semigroup structure is irrelevant) with $\card{S} = \k$. For $A,B \sub S$, $A \subst B$ means $\card{A \setminus B} < \k$. A semifilter $\G$ on $S$ is \emph{large} if
\begin{itemize}
\item If $A \in \G$ and $A \subst B$, then $B \in \G$.
\item there is a partition $\set{X_\a}{\a < \k}$ of $S$ such that $\card{X_\a} < \k$ for all $\a$, and if $A \in [\k]^{\k}$, then $\bigcup_{\a \in A}X_\a \in \G$.
\end{itemize}
By a result of Talagrand (see, e.g., Proposition 2.2 in \cite{B&V}), if $S$ is countable then a semifilter $\G$ on $S$ is large if and only if it is comeager. The main result of this section is the following extension of Proposition~\ref{prop:weakPsets}:

\begin{theorem}\label{thm:lemma}
Suppose $\k$ is regular and $\G$ is a large semifilter on $S$. There is an ultrafilter $\F$ on $\G$ such that $\tilde \F$ is a weak $P_{\k^+}$-set in $U(S)$.
\end{theorem}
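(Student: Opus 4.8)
The plan is to mimic Kunen's construction of weak $P$-points, as adapted in \cite{B&V}, but now carried out at the cardinal $\k$. Fix a partition $\set{X_\a}{\a < \k}$ witnessing that $\G$ is large, so each $\card{X_\a} < \k$ and $\bigcup_{\a \in A} X_\a \in \G$ whenever $A \in [\k]^\k$. We want to build a filter $\F \sub \G$ by transfinite recursion of length $2^\k$, ensuring at the end that (i) $\F$ is an ultrafilter on $\G$ (equivalently, an ultrafilter on the poset $(\G, \subst)$) and (ii) $\tilde\F$ is a weak $P_{\k^+}$-set in $U(S)$, i.e.\ for every set $D \sub U(S) \setminus \tilde\F$ with $\card{D} \le \k$, $\closure{D} \cap \tilde\F = \0$. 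Requirement (ii) is the one that forces the recursion: we enumerate all pairs $(\seq{A_\xi}{\xi < \k}, g)$ where $\seq{A_\xi}{\xi<\k}$ is a $\k$-sequence of subsets of $S$ and $g \colon \k \to [S]^{<\k}$ (or, more conveniently, all candidate ``small'' families of ultrafilters coded by $\k$ many subsets of $S$ each), since there are $(2^\k)^\k = 2^\k$ such objects. At stage $\beta < 2^\k$ we will have built a filter $\F_\beta$ on $\G$ generated by $\le \card\beta + \k$ sets, and we handle the $\beta$-th pair by deciding membership so as to diagonalize against it.

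The two kinds of diagonalization step are: \textbf{(a) making $\F$ an ultrafilter on $\G$:} given $A \sub S$, at some stage we must put either $A$ or $S \setminus A$ into the filter, provided doing so keeps us inside $\G$ and keeps the filter proper; the largeness of $\G$ together with the fact that $\G$ is a semifilter (closed under $\subst$-supersets) is exactly what lets us always choose one of the two — if neither $A$ nor its complement could be added, one gets a contradiction with largeness via the partition, just as comeagerness is used in the countable case. \textbf{(b) killing a small closed set:} suppose $D = \set{q_i}{i < \k} \sub U(S)$ and we have not yet forced $\tilde\F \cap \closure D = \0$. For each $i$, since eventually $q_i \notin \tilde\F$ (if $q_i$ is not to be killed it will be handled when its complement-deciding pair comes up), there is $B_i \in \F$ (at a later stage) with $S \setminus B_i \in q_i$. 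The heart of the Kunen-style argument is to produce, \emph{in advance and with a single set}, a $C \in \G$ that can be added to the filter and simultaneously satisfies $S \setminus C \in q_i$ for all $i < \k$ — this is where we use that each $q_i$ is \emph{uniform}, so every member of $q_i$ has size $\k$, hence $q_i$ concentrates on a $\subst$-large set, and where we use regularity of $\k$ to take a ``diagonal'' over the $\k$ many $q_i$'s. Concretely, one picks for each $i$ a set $Y_i \in q_i$ with $Y_i \subst$-small relative to the partition blocks below $i$ (possible by uniformity and a bookkeeping of the partition), and lets $C$ be the union of partition blocks $X_\a$ avoiding $\bigcup_i Y_i$ suitably; largeness gives $C \in \G$, uniformity gives $S \setminus C \in q_i$, and regularity makes the union manageable. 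Then $C \in \F$ forces $\closure{\{q_i\}} \cap \tilde\F = \0$ for every $i$, hence $\closure D \cap \tilde\F = \0$.

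After the recursion, set $\F = \bigcup_{\beta < 2^\k} \F_\beta$. It is a filter contained in $\G$ by construction; it is an ultrafilter on $\G$ because step (a) was applied cofinally to every subset of $S$; and $\tilde\F$ is a weak $P_{\k^+}$-set in $U(S)$ because step (b) was applied to every $\k$-indexed family of uniform ultrafilters, and any $D \sub U(S)\setminus\tilde\F$ of size $\le\k$ is contained in such a family. (That $\tilde\F$ is nonempty, so that the statement is not vacuous, follows since $\F$ is a proper filter.) One should also check that $\tilde\F \sub U(S)$: this holds because $\F \sub \G$ and $\G$ is large, so every $A \in \F$ has $\card A = \k$, whence every ultrafilter extending $\F$ is uniform.

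\textbf{Main obstacle.} The delicate point is step (b): engineering, at the right moment in the recursion, a \emph{single} set $C \in \G$ that is $\subst$-disjoint from all $\k$ of the ultrafilters $q_i$ at once while still being addable to the growing filter $\F_\beta$ (i.e.\ $C$ meets every element of $\F_\beta$ in a $\subst$-large set). In the countable case this is Kunen's independent-family / $\kappa$-OK sequence trick packaged inside \cite{B&V}; pushing it to regular uncountable $\k$ requires the partition from the definition of ``large'' to play the role that the ground set's countability played before, and requires regularity of $\k$ so that a $\k$-indexed diagonal intersection of $\subst$-large sets, organized along the partition, is again $\subst$-large. Getting the bookkeeping of the partition blocks to interact correctly with both the uniformity of the $q_i$ and the properness of $\F_\beta$ — so that the chosen $C$ works uniformly for all $i<\k$ — is the crux; everything else is the standard length-$2^\k$ filter recursion.
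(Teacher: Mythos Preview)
There is a genuine gap in step~(b), and it is exactly the point you flag as the ``main obstacle.'' Your plan is to enumerate all $\k$-sized families $D=\{q_i:i<\k\}\sub U(S)$ and, at the appropriate stage, add a single $C\in\G$ to the filter with $S\setminus C\in q_i$ for every $i$. But there are $(2^{2^\k})^\k=2^{2^\k}$ such families, not $2^\k$, so they cannot be enumerated in a recursion of length $2^\k$; a $\k$-sequence of subsets of $S$ does not code a $\k$-sequence of ultrafilters. Your description also smuggles in future information: at stage $\beta$ you cannot yet tell which $q_i$ will lie outside $\tilde\F$, so the sentence ``since eventually $q_i\notin\tilde\F$ \ldots\ there is $B_i\in\F$ (at a later stage)'' is circular. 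Finally, the diagonal-over-the-partition construction you sketch (``pick $Y_i\in q_i$ small relative to blocks below $i$'') does not produce a set that is simultaneously in $\G$, compatible with $\F_\beta$, and outside every $q_i$; nothing in the hypotheses forces the $q_i$ to be arranged nicely along the partition.

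The paper (following Baker--Kunen) resolves this by never diagonalizing against families of ultrafilters directly. Instead it fixes a \emph{hatfunction} $\widehat{\ \ }\colon[\k^+]^{<\w}\to[\k]^{<\w}$ with the property that any $\widehat{\ \ }$set is automatically a weak $P_{\k^+}$-set, and then diagonalizes against monotone families $\seq{C_r}{r\in[\k]^{<\w}}$ of elements of $\G$: there are only $(2^\k)^\k=2^\k$ of these, so the counting works. The ``single set $C$'' you were looking for is replaced by $\k^+$ many sets $D_\a$ satisfying $D_{\boxed{p}}\subst C_{\widehat p}$, and producing these requires not just the partition but a full $(\F,\G)$-independent $\widehat{\ \ }$step-family matrix (the $\k$-analogue of an independent family, transported into $\G$ via the partition). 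That matrix is what lets the odd step go through while keeping enough independence in reserve for later stages; the even step, where your part~(a) lives, has three cases rather than the dichotomy you describe, because adding $B_\mu$ may destroy independence even when it keeps the filter inside $\G$. In short, your recursion skeleton is right, but the objects being enumerated at the odd steps, and the combinatorial tool used to handle them, are both missing.
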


For the case $\G = [\k]^{\k}$, this theorem asserts that there is some $p \in U(S)$ that is a weak $P_{\k^+}$-point in $U(S)$. This result was proved by Baker and Kunen in \cite{B&K}. Our proof of Theorem~\ref{thm:lemma} is a modification of their proof. The changes we make are essentially technical: that is, the essential idea of the proof remains mostly the same. Our exposition here will be somewhat terse, since our main goal is just to show how to modify the proof in \cite{B&K}, and that the proof still works with these modifications in place.

We begin by recalling some relevant definitions:
\begin{itemize}
\item $\mathcal{FR}$ is the filter $\set{A \sub S}{\card{S \setminus A} < \k}$ (note $U(S) = \tilde{\mathcal{FR}}$).
\item Given a sequence $\seq{X_\a}{\a < \k}$ of subsets of $S$ and $p \in [\k]^{<\w}$, let $X_{\boxed{p}} = \bigcap_{\a \in p}X_\a$.
\item A \emph{hatfunction} is a function $\widehat{\ }: [\k^+]^{<\w} \to [\k]^{<\w}$ such that $\widehat \0 = \0$ and if $p \sub q$ then $\widehat p \sub \widehat q$.
\item Given a hatfunction $\widehat{\ \ }$ and a closed $X \sub U(S)$, $X$ is a $\widehat{\ \ }$\emph{set} in $U(S)$ if, for any monotone collection $\set{U_r}{r \in [\k]^{<\w}}$ of neighborhoods of $X$ (in this context, \emph{monotone} means that $U_r \supseteq U_s$ whenever $r \sub s$), there are neighborhoods $\set{V_\a}{\a < \k^+}$ of $X$ such that $V_{\boxed{r}} \sub U_{\widehat{r}}$ for every $r \in [\k^+]^{<\w}$.
\end{itemize}

\begin{lemma}\label{lem:hatset}
There is a hatfunction $\widehat{\ }: [\k^+]^{<\w} \to [\k]^{<\w}$ such that every $\widehat{\ \ }$set $X$ is also a weak $P_{\k^+}$-set in $U(S)$.
\end{lemma}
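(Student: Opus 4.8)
The plan is to reduce the statement, via Stone duality, to a single negative partition relation on $\k^+$, and to build the hatfunction directly from a coloring witnessing it. Since $\k^+$ is a successor cardinal, the relation $\k^+\not\to[\k^+]^2_\k$ is a ZFC theorem (it follows, e.g., from Todorcevic's work on walks on ordinals), so I would fix a coloring $c\colon[\k^+]^2\to\k$ with $c\big[[W]^2\big]=\k$ for every $W\in[\k^+]^{\k^+}$, and set $\widehat t=c\big[[t]^2\big]$ for $t\in[\k^+]^{<\w}$ (so $\widehat t=\0$ when $\card t\le1$). Then $\widehat\0=\0$, $\widehat{\ }$ is monotone (as $t\sub t'$ gives $[t]^2\sub[t']^2$), and $\card{\widehat t}\le\binom{\card t}{2}<\w$, so this is a hatfunction. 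Its only relevant feature: for every $\a<\k$ and every $W\in[\k^+]^{\k^+}$ there is a two-element set $t\sub W$ with $\widehat t=\{\a\}$, since $\a\in c\big[[W]^2\big]$.

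Given a $\widehat{\ }$set $X$, I would write $X=\tilde\F$ (Stone duality, using that $X$ is closed), fix $D\sub U(S)\setminus X$ with $\card D\le\k$ enumerated as $\set{p_\a}{\a<\k}$ (with repetitions if necessary), and show $\closure D\cap X=\0$. For each $\a$ choose $A_\a\in\F$ with $A_\a\notin p_\a$, possible since $p_\a\notin\tilde\F=\bigcap_{A\in\F}\closure A$. For $r\in[\k]^{<\w}$ put $U_r=\closure{\bigcap_{\a\in r}A_\a}$: each $U_r$ is a clopen neighborhood of $X$ (as $\bigcap_{\a\in r}A_\a\in\F$), and $r\sub s$ gives $U_s\sub U_r$, so applying the $\widehat{\ }$set property to this monotone family yields neighborhoods $\set{V_\b}{\b<\k^+}$ of $X$ with $V_{\boxed r}\sub U_{\widehat r}$ for all $r\in[\k^+]^{<\w}$. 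Shrinking each $V_\b$ to a clopen set between $X$ and $V_\b$ (compactness of $X$, zero-dimensionality of $\b S$), I may assume $V_\b=\closure{B_\b}$ with $B_\b\in\F$.

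Now suppose toward a contradiction that $q\in\closure D\cap X$. Since $q\in X$ we have $\F\sub q$, so $B_\b\in q$, i.e.\ $q\in\closure{B_\b}$, for every $\b$; since $q\in\closure D$, the clopen neighborhood $\closure{B_\b}$ of $q$ meets $D$, so $B_\b\in p_\a$ for some $\a$. Hence $\k^+=\bigcup_{\a<\k}\set{\b<\k^+}{B_\b\in p_\a}$, so by regularity of $\k^+$ there is $\a^*<\k$ with $W:=\set{\b<\k^+}{B_\b\in p_{\a^*}}$ of size $\k^+$. Pick $\{\b,\g\}\in[W]^2$ with $c(\{\b,\g\})=\a^*$. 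Then $B_\b\cap B_\g\in p_{\a^*}$, while
$$\closure{B_\b\cap B_\g}=\closure{B_\b}\cap\closure{B_\g}\sub V_\b\cap V_\g=V_{\boxed{\{\b,\g\}}}\sub U_{\widehat{\{\b,\g\}}}=\closure{A_{\a^*}},$$
using $\widehat{\{\b,\g\}}=\{c(\{\b,\g\})\}=\{\a^*\}$; so $B_\b\cap B_\g\sub A_{\a^*}$, forcing $A_{\a^*}\in p_{\a^*}$, a contradiction. Thus $\closure D\cap X=\0$, and as $D$ was arbitrary, $X$ is a weak $P_{\k^+}$-set in $U(S)$.

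The genuine content is the partition relation $\k^+\not\to[\k^+]^2_\k$; once that is identified, the rest is bookkeeping. The point I expect to be delicate -- and the one to get right -- is \emph{why} a ``finitary'' hypothesis (comparing finite intersections $V_{\boxed r}$ to the $U_r$'s) can be cashed in against a set $D$ of full size $\k$: it works precisely because the $\widehat{\ }$set property returns $\k^+$-many neighborhoods $V_\b$ rather than merely $\k$-many, so one point $p_{\a^*}$ must be responsible for $\k^+$ of them, and the coloring then forces a \emph{pair} of the corresponding indices to ``see'' exactly $\a^*$ -- which is the index at which $A_{\a^*}$ produces the contradiction. The topological facts used silently are standard: clopen subsets of $\b S$ are exactly the sets $\closure A$ with $A\sub S$; $\closure{A\cap B}=\closure A\cap\closure B$; $X=\tilde\F\sub\closure A$ iff $A\in\F$; $\closure C\sub\closure D$ iff $C\sub D$ for $C,D\sub S$; and a compact subset of $\b S$ inside an open set has a clopen set between.
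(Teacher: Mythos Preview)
Your argument is correct. The paper itself gives no proof here---it simply cites Lemmas 2.5, 2.6, and 5.2 of Baker--Kunen \cite{B&K} for points and remarks that the extension to closed sets is trivial---so you have supplied a full, self-contained proof where the paper defers to the literature.

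The route you take is close in spirit to Baker--Kunen but somewhat more direct. Their machinery is designed to yield hatfunctions whose $\widehat{\ \ }$sets are $\k^+$-good and $\k^+$-mediocre (properties strictly stronger than being a weak $P_{\k^+}$-set, as the paper notes just after the lemma), and the construction of the hatfunction in \cite{B&K} is correspondingly more elaborate. You instead isolate exactly the combinatorial input needed for the weak $P_{\k^+}$-set conclusion alone, namely the negative square-bracket relation $\k^+\not\to[\k^+]^2_\k$, and build the hatfunction in one line as $\widehat t=c\big[[t]^2\big]$. The pigeonhole step (some $p_{\a^*}$ must lie in $\k^+$ many $\closure{B_\b}$'s) together with the coloring property (some pair from those indices hits color $\a^*$) is precisely the mechanism Baker--Kunen's framework encodes; you have just unpacked it. Your approach buys transparency and brevity for the result actually needed; theirs buys the stronger conclusions the paper mentions in passing but does not use. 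One small remark: the partition relation you invoke is indeed a ZFC theorem here since $\k$ is assumed regular in this section, so Todorcevic's walks give even the stronger $\k^+\not\to[\k^+]^2_{\k^+}$; for singular $\k$ the situation is more delicate, which dovetails with the paper's own caveat that regularity is used only at this lemma and Lemma~\ref{lem:matrix}.
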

\begin{proof}
See Lemmas 2.5, 2.6, and 5.2 in \cite{B&K} for the case when $X$ is a point. The extension to arbitrary $\widehat{\ \ }$sets is trivial; some discussion of this can be found in Section 2 of \cite{BK2}.
\end{proof}

Although we do not need to define these terms here, we point out that the $\widehat{\ \ }$set mentioned in this lemma is both $\k^+$-good and $\k^+$-mediocre (which is stronger than merely being a weak $P_{\k^+}$-set).

Given the hatfunction mentioned in Lemma~\ref{lem:hatset}, the proof of Theorem~\ref{thm:lemma} is accomplished by constructing an ultrafilter $\F$ on $\G$ such that $\tilde \F$ is a $\widehat{\ \ }$set in $U(S)$. The construction is by a $2^\k$-step transfinite recursion using a suitably chosen matrix of sets in $\G$.

\begin{itemize}
\item Given a hatfunction $\widehat{\ \ }$ and a large semifilter $\G$, $\mathcal M$ is a $\widehat{\ \ }$\emph{step-family} over $\G$ if $\mathcal M = \set{E_r}{r \in [\k]^{<\w}} \cup \set{A_\a}{\a < \k^+}$ and
\begin{enumerate}
\item $E_s \cap E_t = \0$ for distinct $s,t \in [\k]^{<\w}$.
\item $\card{A_{\boxed{p}} \cap \bigcup \set{E_s}{s \not\supseteq \widehat p}} < \k$ for each $p \in [\k^+]^{<\w}$.
\item $\widehat p \sub s$ implies $A_{\boxed{p}} \cap E_s \in \G$ for each $p \in [\k^+]^{<\w}$, $s \in [\k]^{<\w}$.
\end{enumerate}
\item Given a filter $\F$ on $\G$ and a hatfunction $\widehat{\ \ }$, an \emph{$(\F,\G)$-independent $\widehat{\ \ }$step-family matrix over $I$} is a collection $\set{\mathcal M^i}{i \in I}$ with
$$\mathcal M^i = \set{E^i_r}{r \in [\k]^{<\w}} \cup \set{A^i_\a}{\a < \k^+}$$
such that
\begin{enumerate}
\item For each $i \in I$, $\mathcal M^i$ is a $\widehat{\ \ }$step-family over $\G$.
\item If $p_0,\dots,p_n \in [\k^+]^{<\w}$, $s_0,\dots,s_n \in [\k]^{<\w}$ with each $\widehat p_k \sub s_k$, and $i_0,\dots,i_n \in I$ with all of the $i_k$ distinct, then
$$\left( A^{i_0}_{\boxed{p_0}} \cap E^{i_0}_{s_0} \right) \cap \dots \cap \left( A^{i_n}_{\boxed{p_n}} \cap E^{i_n}_{s_n} \right) \cap C \in \G,$$
for every $C \in \F$.
\end{enumerate}
\end{itemize}

\begin{lemma}\label{lem:matrix}
Suppose $\k$ is regular, $\widehat{\ \ }$ is a hatfunction, and $\G$ is a large semifilter on $S$. There is an $(\mathcal{FR},\G)$-independent $\widehat{\ \ }$step-family matrix over $2^\k$.
\end{lemma}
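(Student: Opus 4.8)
The plan is to reduce the lemma to the single case $\G = [\k]^\k$ and then invoke a mild variant of the matrix construction of Baker and Kunen \cite{B&K}. Since $\G$ is large, fix once and for all a partition $\set{X_\a}{\a<\k}$ of $S$ witnessing this: each $\card{X_\a}<\k$, and $\bigcup_{\a\in Z}X_\a\in\G$ whenever $Z\in[\k]^\k$. For $Z\sub\k$ write $\widehat Z=\bigcup_{\a\in Z}X_\a$. The key observation is that $Z\mapsto\widehat Z$ is a Boolean embedding of $\mathcal P(\k)$ into $\mathcal P(S)$: it preserves arbitrary unions, finite intersections, and complements. Moreover, using that $\k$ is regular and the blocks are small, $Z\in[\k]^\k$ implies $\widehat Z\in\G$, and $\card Z<\k$ implies $\card{\widehat Z}<\k$; and since $\card{S\setminus C}<\k$ for $C\in\mathcal{FR}$, letting $W=\set{\a<\k}{X_\a\cap(S\setminus C)\neq\0}$ we get $\card W<\k$ and $C\supseteq\widehat{(\k\setminus W)}$, so the sets $\widehat{(\k\setminus W)}$ with $\k\setminus W$ in the co-$(<\k)$ filter $\mathcal{FR}_\k$ of $\k$ are cofinal in $\mathcal{FR}$.

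Given this, it suffices to construct, on the ground set $\k$, an $(\mathcal{FR}_\k,[\k]^\k)$-independent $\widehat{\ }$step-family matrix $\set{\mathcal M^i}{i\in 2^\k}$ over $2^\k$, and then apply the Boolean embedding $\widehat{\ }$ to every set appearing in every $\mathcal M^i$. Indeed: disjointness (clause (1)) is preserved because $\widehat{Z_s\cap Z_t}=\widehat Z_s\cap\widehat Z_t$; clause (2) is preserved because $\widehat{\ }$ sends size-$<\k$ sets to size-$<\k$ sets; clause (3) is preserved because $\widehat{\ }$ sends members of $[\k]^\k$ to members of $\G$; and the matrix-independence clause (2) for $S$ follows from the corresponding clause on $\k$ together with the cofinality remark above, since for $C\in\mathcal{FR}$ it is enough to verify that the relevant finite intersection meets $\widehat{(\k\setminus W)}\sub C$, which amounts to intersecting the corresponding $\k$-side intersection with $\k\setminus W\in\mathcal{FR}_\k$ and then checking the result lies in $[\k]^\k$ (using again that $\k$ is regular, so a size-$\k$ subset of $\k$ remains size $\k$ after deleting $<\k$ points). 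This part of the argument is routine.

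For the construction on $\k$ with the largeness ideal $[\k]^\k$ I would follow \cite{B&K} almost verbatim, reading \emph{large} throughout as \emph{of size $\k$}. One identifies $\k$ with a set $\Omega$ of finite approximations of size $\k$, and builds the $2^\k$ columns from a Hausdorff-type independent family: each column is a suitable refinement of a set of the form $\set{\sigma\in\Omega}{\sigma\text{ decides the }i\text{-th coordinate positively}}$. Inside each column one installs the pairwise disjoint $\set{E_r}{r\in[\k]^{<\w}}$ and the $\set{A_\a}{\a<\k^+}$ so that they respect the given hatfunction as demanded by clauses (2) and (3); this is exactly the content of the core construction of \cite{B&K} (originally carried out for a point rather than a general $\widehat{\ }$set — the upgrade is explained in \cite{BK2}). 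Cross-column independence is guaranteed, as in \cite{B&K}, by the fact that for distinct columns $i_0,\dots,i_n$ and finitely many constraints, the set of approximations $\sigma\in\Omega$ realizing all of them is the complement of a set of size $<\k$, hence still of size $\k$ after intersecting with any co-$(<\k)$ set.

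The hard part is not any single cardinality estimate but the simultaneous bookkeeping: one must keep the hat-alignment clauses (2)--(3) active inside every one of the $2^\k$ columns while maintaining the cross-column independence of clause (2) of the matrix, never letting a required set drop below size $\k$. This is the technical heart of \cite{B&K}, and I do not expect to improve on their presentation; my contribution would be only to observe (i) that their construction goes through with $[\k]^\k$ in place of a single point, which is essentially formal, and (ii) that the Boolean embedding $\widehat{\ }$ of the first step transports the resulting matrix to an arbitrary large semifilter $\G$. The whole point of working with \emph{large} rather than, say, \emph{comeager} semifilters is precisely that largeness supplies the partition needed to make step (i)--(ii) work for uncountable $\k$.
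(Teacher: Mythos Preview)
Your proposal is correct and follows essentially the same route as the paper: cite Baker--Kunen \cite{B&K} for the base case $\G=[\k]^\k$ on the ground set $\k$, then push the resulting matrix forward through the block map $Z\mapsto\bigcup_{\a\in Z}X_\a$ supplied by the largeness of $\G$ (the paper writes this as $B\mapsto B'$). Your treatment is more explicit about why the transfer preserves each clause and about the cofinality of the pushed-forward co-small sets in $\mathcal{FR}$, but the underlying argument is identical; one cosmetic point worth fixing is that you overload the symbol $\widehat{\ \ }$ for both the hatfunction and the block map.
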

\begin{proof}
The case $\G = [\k]^{\k}$ was proved (for regular $\k$ only) in \cite{B&K} as Theorem 4.5. Let $\set{\mathcal M^i_0}{i \in 2^\k}$ be a matrix satisfying the conclusions of the lemma for the case $\G = [\k]^{\k}$.

Using the fact that $\G$ is a large semifilter, find a partition $\set{X_\a}{\a < \k}$ of $S$ such that $\card{X_\a} < \k$ for each $\a$ and if $A \in [\k]^{\k}$ then $\bigcup_{\a \in A}X_\a \in \G$. For every $B \sub \k$, let $B' = \bigcup_{\a \in B}X_\a$. For each $i \in 2^\k$ let $\mathcal M^i = \set{B'}{B \in \mathcal M^i_0}$. It is routine to check that $\set{\mathcal M^i}{i \in 2^\k}$ is as required. Note that checking part $(2)$ of the definition of a $\widehat{\ \ }$step-family uses the regularity of $\k$.
\end{proof}

Lemma~\ref{lem:matrix} is the only place in this section where the regularity of $\k$ is needed. Thus a positive answer to the following question would mean that Theorem~\ref{thm:lemma} holds for singular $\k$ too.

\begin{question}\label{question:matrix}
Is Lemma~\ref{lem:matrix} true for singular $\k$?
\end{question}

As we will see in the next section, if Theorem~\ref{thm:lemma} holds for singular $\k$, then a form of our unabridged main theorem holds for semigroups of singular cardinality.

\begin{proof}[Proof of Theorem~\ref{thm:lemma}]
As in the proof of Theorem 6.1 in \cite{B&K}, we build an increasing sequence of filters $\seq{\F_\mu}{\mu < 2^\k}$ by transfinite recursion, starting with $\F_0 = \mathcal{FR}$. In the end, $\F = \bigcup_{\mu < 2^\k}\F_\mu$ will satisfy the conclusions of the theorem.

To facilitate the construction, we begin with a matrix $\set{\mathcal M^i}{i \in I_0}$ that is an $(\F_0,\G)$-independent $\widehat{\ \ }$step-family matrix over $I_0 = 2^\k$. Along with the $\F_\mu$, we also obtain a decreasing sequence $\seq{I_\mu}{\mu < 2^\k}$ such that, at stage $\mu$, $\set{\mathcal M^i}{i \in I_\mu}$ is an $(\F_\mu,\G)$-independent $\widehat{\ \ }$step-family matrix over $I_\mu$.

Step $\mu$ of the recursion looks different depending on whether the integer part of $\mu$ is even or odd (the \emph{integer part} of $\mu$ being the unique $n$ such that $\mu = \lambda+n$ for some limit ordinal $\lambda$). At the even steps, we ensure that our filter will be an ultrafilter on $\G$, and at the odd steps, we ensure that our filter's Stone dual will be a $\widehat{\ \ }$set. To do this, define $B_\mu, C^r_\mu \in \G$, for $\mu < 2^\k$ and $r \in [\k]^{<\k}$, so that
\begin{itemize}
\item $\G = \set{B_\mu}{\mu < 2^\k \text{ and } \mu \text{ is even}}$.
\item Each $\seq{C^\mu_r}{r \in [\k]^{<\w}}$ is a monotone sequence of elements of $\G$, and every such sequence appears as $\seq{C^\mu_r}{r \in [\k]^{<\w}}$ for $2^\k$ distinct odd values of $\mu$.
\end{itemize}

The following list of conditions needs to be satisfied at every stage of our recursion. Note that a similar list is given in \cite{B&K}, but we have had to modify their condition $(5)$ and add a new condition $(7)$.
\begin{enumerate}
\item If $\mu < \nu$, then $\F_\mu \sub \F_\nu$ and $I_\mu \supseteq I_\nu$.
\item For limit $\nu$, $\F_\nu = \bigcup_{\mu < \nu}\F_\mu$ and $I_\nu = \bigcap_{\mu < \nu}I_\mu$.
\item Each $I_{\mu} \setminus I_{\mu+1}$ is finite.
\item $\set{\mathcal M^i}{i \in I_\mu}$ is an $(\F_\mu,\G)$-independent $\widehat{\ \ }$step-family matrix over $I_\mu$.
\item If $\mu$ is even, then either $B_\mu \in \F_{\mu+1}$ or there is some $C \in \F_{\mu+1}$ such that $C \cap B_\mu \notin \G$.
\item If $\mu$ is odd and each $C^\mu_r \in \F_\mu$, then there are $D^\mu_\a \in \F_{\mu+1}$ for $\a < \k^+$ such that $D^\mu_{\boxed{p}} \subst C^\mu_{\widehat p}$ for all $p \in [\k^+]^{<\w}$.
\item $\F_\mu$ is a filter on $\G$.
\end{enumerate}

It is clear that if conditions $(5)$ and $(7)$ are satisfied for every $\mu$, then $\F$ will be an ultrafilter on $\G$. It is also clear that if condition $(6)$ is satisfied at every stage then $\tilde \F$ will be a $\widehat{\ \ }$set. To finish the proof of the theorem, we just need to check that it is actually possible to carry out such a construction.

For limit steps the construction is prescribed by $(2)$, and it is clear that the other conditions are not violated at limits. Given $\mathcal E \sub \mathcal{P}(S)$, let $\gen{\mathcal E}$ denote the filter generated by $\mathcal E$.

For the odd step, we may assume that each $C^\mu_r \in \F_\mu$, since otherwise we can just put $\F_{\mu+1} = \F_\mu$ and $I_{\mu+1} = I_\mu$. Pick any $i \in I_\mu$ and set $I_{\mu+1} = I_\mu \setminus \{i\}$. For each $\a < \k^+$ define
$$D_\a = A_\a^i \cap \bigcup \set{C_s^i \cap E_s^i}{s \in [\k]^{<\w}}$$
and let $\F_{\mu+1} = \gen{\F_\mu \cup \set{D_\a}{\a < \k^+}}$. This is the same as in \cite{B&K}, and it is verified there that conditions $(1)-(4)$ and $(6)$ are satisfied at $\F_{\mu+1}$. Condition $(5)$ is vacuously satisfied at the odd step, so we just need to check that condition $(7)$ is satisfied. For this, note that every element of $\F_{\mu+1}$ contains an element of the form $D_{\boxed{p}} \cap C$ for some $p \in [\k^+]^{<\w}$ and $C \in \F_\mu$. But
\begin{align*}
D_{\boxed{p}} \cap C & = A_\boxed{p}^i \cap \bigcup \set{C_s^i \cap E_s^i}{s \in [\k]^{<\w}} \cap C \\
& \sub A_\boxed{p}^i \cap E_{\widehat p}^i \cap (C^i_{\widehat p} \cap C),
\end{align*}
 and the latter set is in $\G$ because $C^i_{\widehat{p}} \cap C \in \F_\mu$ and condition $(4)$ is satisfied at $\mu$.

For the even step, we have three cases.

\emph{Case I: } Suppose $\gen{\F_\mu \cup \{B_\mu\}}$ is not a filter on $\G$. In this case, put $\F_{\mu+1} = \F_\mu$ and $I_{\mu+1} = I_\mu$. Since $\gen{\F_\mu \cup \{B_\mu\}}$ is not a filter on $\G$, there is some $C \in \F_\mu$ with $B_\mu \cap C \notin \G$, so that $(5)$ is satisfied. It is easy to see that the other conditions are also satisfied.

\emph{Case II: } Suppose $\gen{\F_\mu \cup \{B_\mu\}}$ is a filter on $\G$ and $\set{\mathcal M^i}{i \in I}$ is an $(\gen{\F_\mu \cup \{B_\mu\}},\G)$-independent $\widehat{\ \ }$step-family matrix over $I_{\mu}$. In this case, put $\F_{\mu+1} = \gen{\F_{\mu} \cup \{B_{\mu}\}}$ and $I_{\mu+1} = I_{\mu}$. Again, it is easy to see that all of our conditions are satisfied.

\emph{Case III: } Suppose $\gen{\F_{\mu} \cup \{B_{\mu}\}}$ is a filter on $\G$, but $\set{\mathcal M^i}{i \in I}$ is not an $(\gen{\F_{\mu} \cup \{B_{\mu}\}},\G)$-independent $\widehat{\ \ }$step-family matrix over $I_{\mu}$. Then there are $p_0,\dots,p_n \in [\k^+]^{<\w}$, $s_0,\dots,s_n \in [\k]^{<\w}$ with each $\widehat p_k \sub s_k$, and $i_0,\dots,i_n \in I$ with all of the $i_k$ distinct, such that
$$\left( A^{i_0}_{\boxed{p_0}} \cap E^{i_0}_{s_0} \right) \cap \dots \cap \left( A^{i_n}_{\boxed{p_n}} \cap E^{i_n}_{s_n} \right) \cap (B_{\mu} \cap C_0) \notin \G$$
for some $C_0 \in \F$. In this case, put
$$\F_{\mu+1} = \gen{\F_{\mu} \cup \{ A^{i_0}_{\boxed{p_0}},\dots,A^{i_n}_{\boxed{p_n}} \} \cup \{ E^{i_0}_{s_0},\dots,E^{i_n}_{s_n} \}}$$
and $I_{\mu+1} = I_{\mu} \setminus \{i_0,\dots,i_n\}$. Letting
$$C = \left( A^{i_0}_{\boxed{p_0}} \cap E^{i_0}_{s_0} \right) \cap \dots \cap \left( A^{i_n}_{\boxed{p_n}} \cap E^{i_n}_{s_n} \right) \cap C_0,$$
we have $C \in \F_{\mu+1}$ and $B_{\mu} \cap C \notin \G$, so that condition $(5)$ is satisfied. Conditions $(1)-(3)$ and $(6)$ are clearly satisfied. Condition $(7)$ is satisfied because condition $(4)$ is satisfied at stage $\mu$. It remains to check that $(4)$ is satisfied at stage $\mu+1$.

Let $j_0,\dots,j_m \in I_{\mu+1}$ and let $C \in \F_{\mu+1}$. Given how $\F_{\mu+1}$ is defined,
$$C \supseteq \bigcap_{\ell \leq n} \left( A^{i_\ell}_{\boxed{p_\ell}} \cap E^{i_\ell}_{s_\ell} \right) \cap C_0$$
for some $C_0 \in \F_{\mu}$. Thus
$$\bigcap_{k \leq m} \left( A^{j_k}_{\boxed{p_k}} \cap E^{j_k}_{s_k} \right) \cap C \supseteq \bigcap_{k \leq m} \left( A^{j_k}_{\boxed{p_k}} \cap E^{j_k}_{s_k} \right) \cap \bigcap_{\ell \leq n} \left( A^{i_\ell}_{\boxed{p_\ell}} \cap E^{i_\ell}_{s_\ell} \right) \cap C_0,$$
and this is in $\G$ because $\set{\mathcal M^i}{i \in I_\mu}$ is an $(\F_{\mu},\G)$-independent $\widehat{\ \ }$step-family matrix over $I_{\mu}$.
\end{proof}

\section{The main theorem (unabridged)}\label{sec:general}

In this section we prove the unabridged version of our main theorem. Most of the work is already done: the main algebraic ingredients were developed in Sections \ref{sec:prelims} and \ref{sec:countable}, and the main set-theoretic ingredients were developed in Section~\ref{sec:lemma}. In order to make the results of Section~\ref{sec:lemma} applicable to $U(S)$, we have the following lemma:

\begin{lemma}\label{lem:thickislarge}
Let $S$ be an uncountable semigroup with $\card{S} = \k$.
\begin{enumerate}
\item If $\k$ is regular and $S$ is very weakly cancellative, $\thick_S$ is a large semifilter.
\item If $\k$ is singular and $S$ is $\mu$-weakly cancellative for some $\mu < \k$, $\thick_S$ is a large semifilter.
\end{enumerate}
\end{lemma}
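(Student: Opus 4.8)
The plan is to verify the two clauses of the definition of ``large semifilter'' for $\thick_S$, reusing the machinery already set up for the countable case. First I would observe that $\thick_S$ is a semifilter closed under $\subst$: nontriviality is clear, and if $A \in \thick_S$ and $A \subst B$, then by Proposition~\ref{prop:fact1} there is a minimal left ideal $L$ with $L \sub \closure A$; since $\card{A \setminus B} < \k$ and $L \sub U(S)$ (here I invoke Proposition~\ref{prop:fact3}(2) or (3), according to whether $\k$ is regular or singular, which is exactly where the cancellativity hypothesis enters), no ultrafilter in $L$ can contain $A \setminus B$, so $L \sub \closure B$ and hence $B \in \thick_S$ by the converse direction of Proposition~\ref{prop:fact1}. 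This handles the upward-heredity clause in both cases (1) and (2) simultaneously.

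For the second clause I need a partition $\set{X_\a}{\a < \k}$ of $S$ into pieces of size $<\k$ such that every $\k$-indexed subunion is $S$-thick. The natural idea is to write $S = \bigcup_{\a < \k} S_\a$ as an increasing union of sets of size $<\k$ — possible since $\k = \card S$, using that $\k$ is infinite; in the regular case the $S_\a$ can be taken to have size $<\k$ outright, and in the singular case one takes an increasing cofinal sequence $\seq{\mu_\a}{\a < \cf(\k)}$ in $\k$ and pads to a $\k$-indexed increasing union whose pieces have size $<\k$ — and then set $X_\a = S_\a \setminus \bigcup_{\b < \a} S_\b$. Given $A \in [\k]^\k$, I must show $Y := \bigcup_{\a \in A} X_\a$ is $S$-thick, i.e., for every finite $F \sub S$ there is $s \in S$ with $F \cdot s \sub Y$. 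The key point is that $\card{A} = \k$ forces $Y$ to be ``cofinal'' in $S$ in the following sense: for any set $Z \sub S$ with $\card Z < \k$, there is $\a \in A$ with $Z \cap X_\a = \0$ and in fact $Z \sub \bigcup_{\b \le \gamma} X_\b$ for some $\gamma < \sup(A) $, while $A$, being of size $\k$, meets $\k \setminus (\gamma+1)$; so $Y \setminus Z \neq \0$, and more is true — $Y$ has size $\k$.

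The real content, and the step I expect to be the main obstacle, is converting ``$Y$ has size $\k$ and is cofinal in the filtration'' into genuine $S$-thickness, which is a statement about the \emph{semigroup} action and not merely about cardinality. The mechanism should be a pigeonhole argument parallel to the one in the proof of Proposition~\ref{prop:fact3}: fix finite $F \sub S$; for each $s \in S$, thickness would require $F \cdot s \sub Y$, so I want to rule out that $F \cdot s$ meets the ``small'' complement $S \setminus Y$ for all $s$. Here I will need to use cancellativity again. For a fixed $f \in F$ and a fixed target $t$, very weak (resp.\ $\mu$-weak) cancellativity bounds $\card{\lambda_f^{-1}(t)} < \k$ (resp.\ $<\mu$), hence $\card{\set{s \in S}{f \cdot s \in S \setminus Y}} = \card{\lambda_f^{-1}(S \setminus Y)} \le \sum_{t \notin Y}\card{\lambda_f^{-1}(t)}$; provided $\card{S \setminus Y} < \k$ this sum is $<\k$ by regularity (resp.\ is $< \k$ since it is a union of $<\k$-many sets of size $<\mu$ indexed by a set of size $<\k$, and $\k$ is a limit cardinal). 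Taking the union over the finitely many $f \in F$ still leaves a set of size $<\k$, so there is $s \in S$ avoiding all of them, i.e., $F \cdot s \sub Y$. Thus the argument reduces to arranging the partition so that \emph{every} $\k$-sized subunion $Y$ has small complement — which is automatic: $S \setminus Y = \bigcup_{\a \notin A} X_\a$, and since $\card A = \k$ but we only know $\card{\k \setminus A}$ could also be $\k$, the complement need \emph{not} be small. So the partition must be chosen more cleverly: I would instead use that $\thick_S$ already contains all co-$(<\k)$ sets (the analogue of the ``co-finite sets are thick'' remark) together with Lemma~\ref{lem:thickshifts}, and build the $X_\a$ so that any $\k$-sized subunion is a \emph{superset} of a co-$(<\k)$ set up to a shift; concretely, one reindexes the increasing filtration $\seq{S_\a}{\a<\k}$ by a bijection chosen so that $A \in [\k]^\k$ implies $\bigcup_{\a \in A} X_\a \supseteq S \setminus S_{\gamma}$ for some $\gamma < \k$ — this is the technical heart, and it is exactly the uncountable analogue of Talagrand's characterization cited before Theorem~\ref{thm:lemma}, so I would either cite that characterization directly or mimic its proof. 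Once the partition is in hand, both (1) and (2) follow, with regularity used only to sum $<\k$-many small sets in case (1) and $\mu$-weak cancellativity plus $\k$ limit used in case (2).
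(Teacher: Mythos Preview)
Your treatment of the first clause (upward heredity under $\subst$) is correct and matches the paper's argument essentially verbatim.

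The second clause is where your proposal breaks down. Your naive partition $X_\a = S_\a \setminus \bigcup_{\b<\a}S_\b$ coming from a filtration carries no semigroup information, and you correctly notice that the pigeonhole argument you sketch only goes through when $\card{S \setminus Y} < \k$. Your proposed fix---reindex so that every $\k$-sized subunion $Y$ contains a co-$(<\!\k)$ set---is not merely technical: it is impossible. If $A,B \in [\k]^\k$ are disjoint, then $\bigcup_{\a\in A}X_\a$ and $\bigcup_{\a\in B}X_\a$ are disjoint, so they cannot both be co-$(<\!\k)$. So no partition of $S$ into $\k$ pieces has this property, and citing Talagrand here does not help.

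The idea you are missing is that the partition must be built \emph{using the semigroup operation}, not just the cardinality filtration. The paper enumerates $S = \{s_\a : \a < \k\}$, sets $I_\a = \{s_\xi : \xi<\a\}$, and at stage $\a$ uses a pigeonhole argument (exactly the one you wrote down, but applied to the already-constructed $\bigcup_{\b<\a}X_\b$, which \emph{is} small) to find $t_\a$ with $(I_\a \cdot t_\a)\cap\bigcup_{\b<\a}X_\b=\emptyset$; then $X_\a$ is essentially $I_\a\cdot t_\a$. The payoff is that each piece $X_\a$ contains a translate of the initial segment $I_\a$, so for finite $F\sub S$ one picks any $\b\in A$ large enough that $F\sub I_\b$ and gets $F\cdot t_\b\sub X_\b\sub Y$ directly---no smallness of $S\setminus Y$ is needed. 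Your pigeonhole/cancellativity computation is the right engine, but it belongs inside the recursive construction of the partition, not in the verification at the end.
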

\begin{proof}
The proofs of $(1)$ and $(2)$ are essentially identical except at one point. So we will prove both simultaneously, but at one point will consider two cases according to whether $\k$ is regular or singular.

Let $A \in \thick_S$ and $A \subst B$. By Proposition~\ref{prop:fact1}, there is some minimal left ideal $L$ with $L \sub \closure A$. By Proposition~\ref{prop:fact3}, $L \sub U(S)$. But $A \subst B$ implies $\closure A \cap U(S) \sub \closure B \cap U(S)$. Thus $L \sub \closure B$ and, by another application of Proposition~\ref{prop:fact1}, $B \in \thick_S$. This checks the first half of the definition of a large semifilter.

To check the other half, we build a partition $\set{X_\a}{\a < \k}$ of $S$ by recursion. Write $S = \set{s_\a}{\a < \k}$, and for each $\a < \k$ let $I_\a = \set{s_\xi}{\xi < \a}$. Our construction will ensure that, for every $\a$, $\card{X_\a} \leq \card{\a+1} < \k$ and there is some $t_\a \in S$ such that $I_\a \cdot t_\a \sub X_\a$.

Fix $\a < \k$ and assume that $X_\b$ has already been defined for every $\b < \a$.

We claim that there is some $t_\a \in S$ such that
$$(I_\a \cdot t_\a) \cap \bigcup_{\b < \a}X_b = \0.$$
Suppose this is not the case. Let $\lambda = \card{\a} \cdot \aleph_0 < \k$. Since $\card{\a} < \lambda$ and $\card{X_\b} = \card{\b+1} \leq \lambda$ for every $\b$, we have $\theta = \card{\bigcup_{\b < \a}X_\b} \leq \lambda$. Write $\bigcup_{\b < \a}X_\b = \set{r_\zeta}{\zeta < \theta}$. Since we are assuming that
$$(I_\a \cdot s) \cap \bigcup_{\b < \a}X_b = (\set{s_\xi}{\xi < \a} \cdot s) \cap \set{r_\zeta}{\zeta < \theta} \neq \0$$
for every $s \in S$, we can choose for every $s \in S$ some pair $p(s) = (\xi(s),\zeta(s)) \in \a \times \theta$ such that $s_{\xi(s)} \cdot s = r_{\zeta(s)}$. Note that $\card{\a \times \theta} \leq \lambda < \k$. We now have two cases.

If $\k$ is regular, then by the pigeonhole principle there is some $A \sub \k$ with $\card{A} = \k$ such that $p(s)$ is the same for every $s \in A$. This contradicts very weak cancellativity, which states that the equation $s_{\xi(s)} \cdot x = r_{\zeta(s)}$ must have fewer than $\k$ solutions.

If $\k$ is singular, then there is some $A \sub \k$ with $\card{A} \geq \mu$ such that $p(s)$ is the same for every $s \in A$ (otherwise $\k \leq \lambda \cdot \mu$, since $S = \bigcup_{(a_1,a_2) \in A \times A}p^{-1}(a_1,a_2)$, but this is absurd). This contradicts $\mu$-weak cancellativity, which states that the equation $s_{\xi(s)} \cdot x = r_{\zeta(s)}$ must have fewer than $\mu$ solutions.

In either case, there is some $t_\a \in S$ such that $(I_\a \cdot t_\a) \cap \bigcup_{\b < \a}X_b = \0$. If $s_\a \in \bigcup_{\b < \a}X_\b$, then let $X_\a = A_\a \cdot t_\a$. If $s_\a \notin \bigcup_{\b < \a}X_\b$, let $X_\a = (A_\a \cdot t_\a) \cup \{s_\a\}$. Clearly $\card{X_\a} \leq \card{I_\a}+1 \leq \card{\a+1}$.

By construction, $\set{X_\a}{\a < \k}$ is a partition of $S$ with $\card{X_\a} < \k$ for every $\a$. Now suppose $A \in [\k]^{\k}$. If $F$ is any finite subset of $S$, $F \sub I_\a$ for some $\a < \k$. There is some $\b \in A$ with $\b > \a$, and $F \cdot t_\b \sub I_\b \cdot t_\b \sub X_\b$. Since $F$ was arbitrary, $\bigcup_{\b \in A}X_\b$ is $S$-thick. This shows that $\thick_S$ satisfies the second part of the definition of a large semifilter.
\end{proof}

We can now prove our unabridged main theorem:

\begin{theorem}\label{thm:Psets2}
Suppose $S$ is very weakly cancellative and $\card{S} = \k$ for some regular cardinal $\k$. There is a minimal left ideal of $\b S$ that is a weak $P_{\k^+}$-set in $U(S)$.
\end{theorem}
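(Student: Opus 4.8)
The plan is to follow exactly the pattern of the proof of Theorem~\ref{thm:Psets}, replacing each of its countable ingredients by the uncountable analogue established either in Section~\ref{sec:lemma} or just above in Lemma~\ref{lem:thickislarge}. The three facts to be combined are Proposition~\ref{prop:thicksets} (which is stated and proved for an arbitrary semigroup, so it applies verbatim here), Lemma~\ref{lem:thickislarge}(1), and Theorem~\ref{thm:lemma}.

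Concretely, I would argue as follows. First, since $\k$ is regular and $S$ is very weakly cancellative, Lemma~\ref{lem:thickislarge}(1) gives that $\thick_S$ is a large semifilter on $S$. Second, apply Theorem~\ref{thm:lemma} with $\G = \thick_S$: this produces an ultrafilter $\F$ on $\thick_S$ whose Stone dual $\tilde \F$ is a weak $P_{\k^+}$-set in $U(S)$. Third, invoke Proposition~\ref{prop:thicksets}: because $\F$ is an ultrafilter on $\thick_S$, its Stone dual $\tilde \F$ is a minimal left ideal of $\b S$. Setting $L = \tilde \F$, we conclude that $L$ is simultaneously a minimal left ideal of $\b S$ and a weak $P_{\k^+}$-set in $U(S)$, which is what the theorem asserts.

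I do not expect any genuine obstacle in this final step: all of the difficulty has already been absorbed into Lemma~\ref{lem:thickislarge} (the uncountable, cancellativity-flavored replacement for Lemma~\ref{lem:itsasemifilter}) and into Theorem~\ref{thm:lemma} (the Baker--Kunen-style generalization of Proposition~\ref{prop:weakPsets}). The one point that merits an explicit sentence is that the phrase ``weak $P_{\k^+}$-set in $U(S)$'' is even meaningful for $L$, i.e.\ that $L \sub U(S)$; this is immediate from Proposition~\ref{prop:fact3}(2), since under the present hypotheses $U(S)$ is a two-sided ideal of $\b S$ and hence contains every minimal left ideal of $\b S$.
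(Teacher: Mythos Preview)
Your proposal is correct and matches the paper's own proof essentially verbatim: the paper simply states that the result ``follows immediately from Theorem~\ref{thm:lemma}, Proposition~\ref{prop:thicksets}, and Lemma~\ref{lem:thickislarge},'' which is precisely the three-step combination you outline. Your extra sentence invoking Proposition~\ref{prop:fact3}(2) to justify $L \sub U(S)$ is a harmless clarification that the paper leaves implicit.
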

\begin{proof}
This follows immediately from Theorem~\ref{thm:lemma}, Proposition~\ref{prop:thicksets}, and Lemma~\ref{lem:thickislarge}.
\end{proof}

\begin{theorem}\label{thm:betaS2}
Suppose $S$ is left-cancellative and very weakly right cancellative, and $\card{S} = \k$ is a regular cardinal. There is a minimal left ideal of $U(S)$ that is also prime.
\end{theorem}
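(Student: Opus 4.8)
The plan is to mimic the proof of Theorem~\ref{thm:betaS} almost verbatim, with ``weak $P$-set in $S^*$'' replaced everywhere by ``weak $P_{\k^+}$-set in $U(S)$'' and Theorem~\ref{thm:lemma} used in place of Proposition~\ref{prop:weakPsets}. First I would record that the hypotheses make $S$ very weakly cancellative: left-cancellativity forces each $\lambda_s$ to be injective, so $\card{\lambda_s^{-1}(t)} = 1 < \k$, while very weak right cancellativity makes each $\rho_s$ be ($<\!\k$)-to-one. Hence Proposition~\ref{prop:fact3}(2) applies: $U(S)$ is a closed two-sided ideal of $\b S$, every minimal left ideal of $\b S$ lies inside $U(S)$, and the minimal left ideals of $\b S$ and of $U(S)$ coincide. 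By Theorem~\ref{thm:Psets2} there is a minimal left ideal $L$ of $\b S$ (equivalently, of $U(S)$) that is a weak $P_{\k^+}$-set in $U(S)$; I would fix such an $L$ and show that it is prime in $U(S)$.

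For this, fix $q \in U(S) \setminus L$; it suffices to show $p \cdot q \notin L$ for every $p \in \b S$. Since $S$ is left-cancellative and $L$ is a minimal left ideal, Lemma~\ref{lem:exercise} gives $s \cdot q \notin L$ for every $s \in S$, i.e., $(S \cdot q) \cap L = \0$. As $q \in U(S)$ and $U(S)$ is a left ideal, $S \cdot q \sub U(S)$, so $S \cdot q \sub U(S) \setminus L$, and $\card{S \cdot q} \le \card{S} = \k < \k^+$. Because $L$ is a weak $P_{\k^+}$-set in $U(S)$, it follows that $\closure{S \cdot q}^{\b S} \cap L = \0$ (the closure in $\b S$ coincides with the one in $U(S)$, since $U(S)$ is closed in $\b S$ and contains $S \cdot q$). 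By the continuity of $x \mapsto x \cdot q$ we have $\closure{S \cdot q}^{\b S} = \closure{S}^{\b S} \cdot q = \b S \cdot q$, so $(\b S \cdot q) \cap L = \0$; in particular $p \cdot q \notin L$ for every $p \in \b S$. Since $q \in U(S) \setminus L$ was arbitrary, and $p \cdot q \in L$ holds automatically when $q \in L$ (because $L$ is a left ideal of $\b S$), this shows $p \cdot q \in L$ if and only if $q \in L$ for all $p, q \in U(S)$, i.e., $L$ is prime in $U(S)$.

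I do not expect a substantive obstacle here: the argument is a transcription of the proof of Theorem~\ref{thm:betaS}, and all the genuine difficulty lies in results already in hand --- Lemma~\ref{lem:exercise}, where left-cancellativity enters, and Theorem~\ref{thm:lemma} via Lemma~\ref{lem:matrix}, where the regularity of $\k$ enters through the construction of the $(\mathcal{FR}, \G)$-independent step-family matrix that produces the weak $P_{\k^+}$-set. The one place needing a little care is the cardinality bookkeeping: one must observe that $\card{S \cdot q} \le \card{S} = \k$, which is exactly small enough to be fed into the weak $P_{\k^+}$-set property --- this is precisely why ``weak $P$-set'' had to be promoted to ``weak $P_{\k^+}$-set'' in the uncountable setting --- together with the fact that $S \cdot q \sub U(S)$, which rests on $U(S)$ being a left ideal and hence on the full hypothesis of very weak cancellativity. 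Finally, combining this theorem with Proposition~\ref{prop:2implies3} would immediately yield the unabridged analogue of part~(3) of the Main Theorem: $U(S)$ contains an idempotent that is simultaneously minimal and left-maximal.
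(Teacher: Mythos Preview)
Your proposal is correct and follows essentially the same route as the paper's own proof: invoke Theorem~\ref{thm:Psets2} to obtain a minimal left ideal $L$ that is a weak $P_{\k^+}$-set in $U(S)$, use Lemma~\ref{lem:exercise} to see $(S\cdot q)\cap L=\0$ for $q\notin L$, then apply the weak $P_{\k^+}$-set property together with continuity of $x\mapsto x\cdot q$ to conclude $(\b S\cdot q)\cap L=\0$. If anything, you are slightly more careful than the paper in explicitly verifying that $S\cdot q\sub U(S)$ and $\card{S\cdot q}\leq\k$ before invoking the weak $P_{\k^+}$-set property.
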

\begin{proof}
The proof is similar to that of Theorem~\ref{thm:betaS}.

By Theorem~\ref{thm:Psets2}, there is some $L \sub U(S)$ that is both a weak $P_{\k^+}$-set and a minimal left ideal. By Lemma~\ref{lem:exercise}, if $p \notin L$ then $(S \cdot p) \cap L = \0$. Since $L$ is a weak $P_{\k^+}$-set, $\b S \cdot p = \closure S^{\b S} \cdot p = \closure{S \cdot p}^{\b S}$ is disjoint from $L$. Therefore, if $p \notin L$ then $q \cdot p \notin L$. In other words, $L$ is prime.
\end{proof}

Theorem~\ref{thm:Psets2} is part $(1)$ of our main theorem, and Theorem~\ref{thm:betaS2} is part $(2)$. Proposition~\ref{prop:2implies3} shows that part $(2)$ implies part $(3)$ and completes the proof.

Notice that we have nearly proved a version of our unabridged main theorem for singular $\k$. If we replace the assumption of very weak cancellativity with the assumption that $S$ is $\mu$-weakly cancellative for some $\mu < \card{S}$, then the whole argument remains valid, except for Lemma~\ref{lem:matrix}. At no other point do we use the regularity of $\k$. Thus a positive answer to Question~\ref{question:matrix} would automatically give a version of our main theorem for semigroups of singular cardinality.

\end{document}